\title{Flexible Lagrangians} 
 \author{Yakov Eliashberg$^1$\thanks{Partially supported by NSF grant  DMS-1505910}   \and   
 Sheel Ganatra$^1$\thanks{Supported by an NSF postdoctoral fellowship} \and 
 Oleg Lazarev$^1$\thanks{Partially supported by NSF grant  DMS-1505910 and an NSF grant DGE-114747 } \and \\
 {\small ${ }^1$Department of Mathematics, Stanford University}}
\date{}  
\let\oldmarginpar\marginpar
\renewcommand\marginpar[1]{\-\oldmarginpar[\raggedleft\footnotesize #1]%
{\raggedright\footnotesize #1}}
\theoremstyle{plain}
\newtheorem{theorem}{Theorem}[section]
\newtheorem{thm}[theorem]{Theorem}
\newtheorem{corollary}[theorem]{Corollary}
\newtheorem{cor}[theorem]{Corollary}
\newtheorem{prop}[theorem]{Proposition}
\newtheorem{lemma}[theorem]{Lemma}
\newtheorem{problem}[theorem]{Problem} 
\theoremstyle{remark}
\newtheorem{remark}[theorem]{Remark}
\newtheorem*{remark*}{Remark}
\newtheorem*{example*}{Example}
\theoremstyle{definition}
\newtheorem{definition}[theorem]{Definition}
\renewcommand{\mod}{\mathrm{mod}} 
\newcommand{\wt}{\widetilde}
\newcommand{\wh}{\widehat}
\newcommand{\p}{\partial}
\newcommand{\om}{\omega}
\newcommand{\eps}{\varepsilon}
\newcommand{\Z}{{\mathbb{Z}}}
\newcommand{\R}{{\mathbb{R}}}
\newcommand{\C}{{\mathbb{C}}}
\newcommand{\Int}{{\rm Int\,}} 
\newcommand{\tb}{{\rm tb}}
\newcommand{\Id}{\mathrm {Id}}
\newcommand{\rank}{\mathrm{rank}}
\def\Op{{\mathcal O}{\it p}\,}
\newcommand{\fW}{{\mathfrak W}}
\numberwithin{figure}{section}
\begin{document}
\maketitle

\begin{abstract}
We introduce and discuss  notions of  regularity and flexibility for Lagrangian manifolds with Legendrian boundary in Weinstein domains.  There is a surprising abundance  of flexible  Lagrangians. In turn, this leads    to new constructions of Legendrians submanifolds and Weinstein manifolds. For instance, many closed $n$-manifolds   of dimension $n>2$   can be realized  as  exact Lagrangian submanifolds of $T^*S^n$ with   possibly exotic Weinstein symplectic structures.  These Weinstein structures on $T^* S^n$, infinitely many of which are distinct, are formed by a single handle attachment to the standard $2n$-ball  along the Legendrian boundaries of  flexible Lagrangians. 
We also formulate a number of open problems.
\end{abstract}
\section{Liouville and Weinstein cobordisms}

The main goal of the paper is a discussion of two new notions of regularity and
flexibility for exact Lagrangian cobordisms with Legendrian boundaries in
Weinstein cobordisms,  see Sections \ref{sec:regularlagn} and
\ref{sec:flexiblelagn}.  In particular, we prove an existence $h$-principle for
flexible  Lagrangian cobordisms (Theorem \ref{thm:main}), explore applications
to Lagrangian and Legendrian embeddings and exotic Weinstein structures, and
formulate  throughout the paper numerous open problems.

 \medskip 
 A {\it Liouville cobordism} between contact manifolds
is a $2n$-dimensional cobordism $(W, \p_- W, \p_+W$) equipped with a
pair $(\om, X)$ of a symplectic form and an expanding (Liouville)
vector field for $\om$, i.e. $L_X\om=\om$, which is outward pointing along $\p_+ W$ and inward
pointing along $\p_- W$, such that
 the contact structure  induced by the Liouville form  $\lambda:=\iota(X)\om$ on $\p_\pm W$ coincides with $\xi_\pm$.
 If in addition we are given a Morse function $\phi:W\to\R$ that is {\em defining} for $W$ and {\em Lyapunov} for $X$, i.e.  it attains its minimum on $\p_-W$, its maximum on $\p_+W$ and has no critical points on $\p W$,  and  satisfies the inequality $d\phi(X)\geq c||X||^2$ for some $c>0$, then the triple  $(\om, X,\phi)$ is called 
a {\em Weinstein cobordism} structure on  $W$
between  contact manifolds $(\p W_-,\xi_-)$ and $(\p W_+,\xi_+)$, see  \cite{Eli90,Wei91,EliGro91,CE12}.
 A cobordism with $\p_-W=\varnothing$ will be referred as a {\em Weinstein domain}.
 
 Stable manifolds of  zeroes of $X$ for a Weinstein cobordism structure $(W,\om, X,\phi)$ are necessarily $\om$-isotropic, see \cite{EliGro91}, and in particular the indices of all critical points of $\phi$ are always $\leq n$. A cobordism is called {\em subcritical} if there are are no critical points of index $n$. 

Every Liouville or Weinstein cobordism can be canonically completed by adding {\em cylindrical ends}
  $(-\infty,0]\times\p_-W$ and $[0,\infty)\times \p_+W$ with $X=\frac{\p}{\p s}$ and $\phi$ equal   to $s$ up to an additive constant. Here we denote by $s$ the coordinate corresponding to the first factor.   An important feature of completed Liouville cobordisms, see \cite{EliGro91,CE12}, is that if $(\om_t,X_t), t\in [0,1]$, is a homotopy of completed Liouville cobordisms then there exists an isotopy $\phi_t:W\to W$ such that $\phi_t^*\omega_t=\om_0$, $t\in[0,1]$, which preserves the Liouville field at infinity.
  In other words, the symplectic structure of a completed  Liouville cobordism remains unchanged up to isotopy when one deforms the Liouville structure.
  
  Usually we will not distinguish in the notation between a Weinstein cobordism and its completion.
  Moreover,  
we will allow contact manifolds to have boundaries and will require in this case the cobordisms to be  trivial over boundaries of the contact manifolds. Alternatively, 
a Weinstein cobordism between manifolds $\p_\pm W$ with boundary  can be  viewed as  a sutured manifold with   corner along the suture, see Fig. \ref{fig:satured} (taken from \cite{EM-symp}).
More precisely, we assume that  the boundary  $\p W$ is presented as a union of two manifolds $\p W_-$ and $\p_+W$ with common boundary  $\p^2W=\p_+W\cap \p_-W$, along which it has a corner.  Of course, in this case the function $\phi$ cannot be chosen constant on $\p_-W$ and $\p_+W$.

\begin{figure}[h]
\begin{center}
\includegraphics[scale=.6]{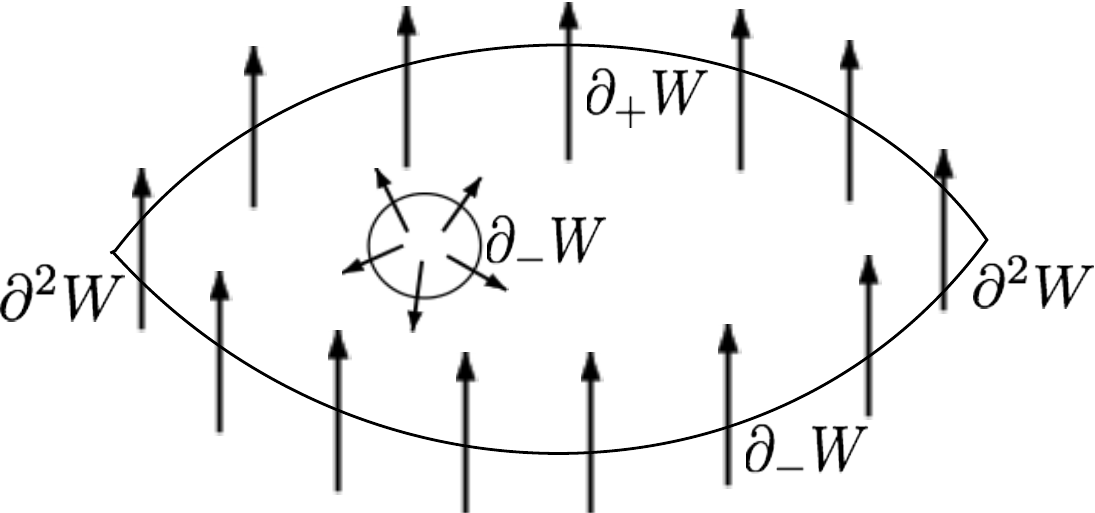}
\caption{A Liouville cobordism $W$ with corners.}\label{fig:satured}
\end{center}  
\end{figure}

Given a $2n$-dimensional Weinstein cobordism  $\fW:=(W,\p_-W,\p_+W;\om,X,\phi)$  we consider in it {\em exact Lagrangian cobordisms with Legendrian boundary}  $ (L,\p_-L,\p_+L)\subset   (W,\p_-W,\p_+W)$. We will additionally require    $L$ to be tangent to $X$ near $\p_\pm L$.  This condition can always  be achieved by a $C^0$-small isotopy of $L$ fixed on $\p_\pm L$. 
 The boundary components $\p_\pm L$ will always   assumed to be closed and contained in $\Int\p_\pm W$. Sometimes we will talk about a {\em parameterized Lagrangian cobordism}, i.e. a diffeomorphism of a smooth cobordism $(L,\p_-L,\p_+L)$ onto an exact  Lagrangian cobordism in $W$ between  Legendrian manifolds  in $\p_\pm W$.
 
 A  Lagrangian cobordism $L$ can be canonically completed to a submanifold  with cylindrical ends in the completion of $W$.
An isotopy between two  exact Lagrangian cobordisms with Legendrian boundaries will be always understood in this class, i.e. as a Hamiltonian isotopy of the completions which at infinity is required to  preserve the Liouville vector field $X$. We note that any  exact  Lagrangian isotopy   with Legendrian boundary  lifts  to a Hamiltonian isotopy of completions.

A Morse decomposition for the Lyapunov function $\phi$ yields an equivalent definition of a Weinstein cobordism as a {\em Weinstein handlebody},  formed by  attaching  handles with symplectically isotropic core discs along contactly isotropic sphere in the regular contact level sets of $\phi$.
A Weinstein cobordism of dimension $2n\geq 6$ is called {\em flexible} if it can
be presented as a Weinstein handlebody so that all critical (i.e. of index $n$) handles are attached along
 {\em loose} Legendrian links, see \cite{CE12, M11} for precise definitions
and discussion. A {\em flexible Weinstein structure} is a choice of such a
presentation.
\section{Regular Lagrangians}\label{sec:regularlagn}
Let $(W, \om, X, \phi)$ be a Weinstein cobordism, and $$ (L,\p_-L,\p_+L) \subset (W, \p_-W,\p_+W,\om, X,
\phi)$$ a Lagrangian cobordism with Legendrian boundary.
\begin{definition}\label{def:regular}
    We say $ L\subset W$ is {\em regular}  if $(W, \om, X, \phi)$ can be deformed  to a Weinstein structure $(W, \om',
    X', \phi')$  through Weinstein structures
    for which $L$ remains Lagrangian, such that the new Liouville vector field
    $X'$ is tangent to $L$. This is equivalent to  the condition $\alpha'|_L=0$, where   $\alpha':=\iota(X')\omega'$ is the corresponding Liouville form.  
\end{definition}


We call such   $(W, \om', X', \phi')$ a {\em tangent Weinstein structure} to
the regular Lagrangian $L$. It follows that all critical points of
$\phi'|_{L}$ are global critical points of $\phi'$, and the local models near such $p$ can be described
by a ``coupled handle attachment'' picture.  
Indeed, let $k$ be the index of  a  critical point  $x$   of   $\phi'$ and $l$ the index of $x$  as a critical point of $\phi'|_L$. We have $l\leq k\leq n$.
 A Weinstein handle of  index $k\leq n$   is isomorphic to the subset $H_k:=\{|p|,|q|,|P|,|Q|\leq 1\}\subset T^*\R^k\times T^* \R^{n-k}$, where  we denoted by $(p,q)$ and $(P,Q)$ the canonical coordinates in $T^*\R^k$ and $T^*\R^{n-k}$.   The handle $H_k$ contains a Lagrangian sub-handle $L_l$ of index $l$ which is the intersection with $H_k$  of   the total space  of the conormal bundle to $$\R^l\subset\R^k\subset T^*\R^k=(T^*\R^k)\times 0\subset T^*\R^k  \subset  T^*\R^k\times T^*\R^{n-k}.$$   When passing  through the critical level  $a=\phi'(x)$ of the critical point $x$  the Weinstein handle  $H_k$ is attached to $\{\phi'\leq a-\eps\}\subset W$ along $\p_-H_k:=H_k\cap  \{|q=1|\}$, and the Lagrangian handle $L_l$ is attached to $\{\phi|_L\leq a-\eps\}\subset L$ along $\p_-L:=  
  \p_-W\cap L_l$.
 
It turns out that given a regular $L$, any Weinstein cobordism structure tangent to $L$ can be further adjusted.
Let 
us call a tangent to $L$ 
Weinstein cobordism structure $(W,\om, X,\phi)$ {\em special}
if there exists a regular value  $c\in\R$  of the function $\phi$ such that
\begin{itemize}\item  all critical points of $\phi$ in the sublevel set $\{\phi \leq c\}$ lie on $L$ and the indices of these critical points for  $\phi$ and $\phi|_L$ coincide;
 \item there are  no critical points of $\phi$  on $L\cap \{\phi\geq c\}$.
\end{itemize}
 In other words,  $(W,\om, X,\phi)$ has the following handlebody presentation. First, one attaches   handles  corresponding to critical points of $\phi|_L$ and then the remaining handles, so that their attaching spheres do not intersect $L$.

For a special tangent to $L$ Weinstein cobordism $(W, \om, X, \phi)$  we  set  $W_L:=\{\phi\geq c\}$  and view $W_L$ as a Weinstein subcobordism  of $(W, \om, X, \phi)$ with the induced  Weinstein structure. We call $W_L$ the {\em complementary Weinstein cobordism to $L$} and note that $\phi$ determines a presentation $W:= T^* L \cup W_L$, where $T^* L$ is endowed with its canonical Weinstein structure. The following lemma asserts that up to homotopy of Weinstein structures for which $L$ remains Lagrangian, the existence of such a presentation is equivalent to regularity:
 
\begin{lemma} \label{lm:equiv-charact-reg}
Let $(L,\p_-L,\p_+L)\subset (W,\p_-W,\p_+W)$
be an exact  Lagrangian  subcobordism in a Weinstein  cobordism  $(W,\om_0,X_0,\phi_0)$  tangent to $L$. Then there is a homotopy $(W,\om_t,X_t,\phi_t)$ of tangent to $L$ Weinstein structures     such that  $(W,\om_1,X_1,\phi_1)$ is special.
     
\end{lemma}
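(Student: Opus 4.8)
The plan is to start from the tangent Weinstein structure $(W,\om_0,X_0,\phi_0)$ and rearrange its critical points, using the standard Weinstein handle-moves (reordering handles by index, handle slides, and creation/cancellation of handle pairs) while staying inside the class of structures for which $X_t$ is tangent to $L$. The key observation, already recorded in the excerpt, is that tangency of $X$ to $L$ forces every critical point of $\phi|_L$ to be a genuine critical point of $\phi$, with a ``coupled handle'' local model $H_k \supset L_l$. So a critical point $x$ on $L$ comes with two indices $l \leq k \leq n$, and the goal is equivalent to: (a) slide all handles whose attaching sphere meets $L$ so that they occur \emph{before} (i.e.\ at lower $\phi$-value than) all handles disjoint from $L$, and (b) among the handles on $L$, arrange that the Weinstein index $k$ equals the Lagrangian index $l$, so that the sublevel set $\{\phi\le c\}$ is exactly a Weinstein thickening $T^*L$ of the sub-handlebody built from the $L_l$'s.

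First I would address (b). A coupled handle $H_k\supset L_l$ with $l<k$ carries, in the $T^*\R^{k-l}\times T^*\R^{n-k}$ directions transverse to $L_l$ inside $H_k$, the germ of a subcritical (index $<n$ in those directions is automatic, but more importantly the handle is attached along a sphere which, after a homotopy of the Weinstein structure rel $L$, can be displaced). The mechanism is: introduce a cancelling pair of a $(k{-}1)$- and a $k$-handle disjoint from $L$ near the critical level, and slide the ``excess'' part of $H_k$ off of $L$ onto the new handle, leaving behind an $L_l$-handle whose ambient Weinstein index has been reduced to $l$. Iterating lowers $k$ to $l$ for each critical point on $L$. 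This is where the hypothesis $2n\ge 6$-type flexibility is \emph{not} needed — we are not asking the complementary handles to be loose, only to be disentangled from $L$ — but it is the step most in danger of requiring extra care, because sliding a Weinstein handle requires the attaching isotropic sphere to be moved through a contact isotopy in a level set, and one must check this isotopy can be taken disjoint from the (Legendrian) trace of $L$ in that level set; a dimension count $\dim \Lambda_L + \dim(\text{attaching sphere}) = (n-1) + (k-1) < 2n-1$ gives the needed genericity for $k\le n$, with the critical case $k=n$, $\dim\Lambda_L = n-1$ in a $(2n-1)$-dimensional level set being exactly borderline and handled by a general-position perturbation of the attaching sphere away from $L$ using that $L$ is tangent to $X$ near its boundary.

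Next, (a): once every critical point on $L$ has matched indices $k=l\le n$ and every critical point off $L$ has its attaching sphere disjoint from $L$ in its level set, a standard reordering of Weinstein handles (which is an honest homotopy of the Weinstein structure, carried out level-by-level and keeping $X$ tangent to $L$ since the handles being reordered do not touch $L$) moves all the $L$-handles below all the non-$L$-handles. Choosing $c$ to be a regular value separating the two groups, the two bulleted conditions in the definition of \emph{special} hold by construction. Throughout, each elementary move is realized by an explicit compactly-supported homotopy of $(\om,X,\phi)$ which by inspection of the local models preserves Lagrangianity of $L$ and tangency of $X$ to $L$; concatenating them gives the desired homotopy $(\om_t,X_t,\phi_t)$ with $(\om_1,X_1,\phi_1)$ special. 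The main obstacle, as indicated, is step (b): making the index-reduction/handle-slide argument precise while certifying at each stage that the isotopies used avoid $L$ and that the interpolating Weinstein structures keep $X$ tangent to $L$.
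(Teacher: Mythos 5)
Your overall plan (first fix the index mismatches at critical points on $L$, then push all $L$-handles below the others) is the same outline as the paper's proof, and your reordering step is fine --- in fact the paper gets it more cheaply: once every critical point $p\in L$ has equal indices for $\phi$ and $\phi|_L$, its stable manifold lies in $L$, so there are no $X$-trajectories from critical points off $L$ down to critical points on $L$, and Lemma 9.45 of \cite{CE12} lets one lower the critical values of the $L$-points below all others by changing $\phi$ alone, keeping $X$ and $\om$ (hence tangency to $L$) untouched; no general-position isotopy of attaching spheres off $L$ is needed there. (Incidentally, your $k=n$ case is not borderline: $(n-1)+(n-1)=2n-2<2n-1$.)

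The genuine gap is in your step (b), the index reduction, and it is not merely a matter of ``extra care'': the mechanism you propose cannot work. Birthing a cancelling $(k-1,k)$-pair \emph{disjoint from $L$} and then handle-sliding does not change the Morse index of $\phi$ at the point $p\in L$ --- handle slides only modify attaching maps, never indices, and a creation performed away from $L$ leaves the coupled local model $H_k\supset L_l$ at $p$ untouched --- so no iteration of these moves turns the ambient index $k$ at $p$ into $l$. The indices of the pair are also wrong: passing from a single index-$k$ point at $p$ to an index-$l$ point at $p$ plus extra points off $L$ changes the handle count by a consecutive pair $(l,l+1)$, not $(k-1,k)$. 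What the paper does instead is a \emph{local} modification of the Lyapunov function on the stable disc: writing $D^k$ for the stable disc of $p$ and $D^l=D^k\cap L$, one replaces $\phi$ on $D^k$ by a function $\wt\phi$ agreeing with $\phi$ on $D^l\cup\partial D^k$, with an index-$l$ critical point at $p$ and two new critical points $p',p''$ of indices $l+1$ and $k$ on $D^k\setminus D^l$; equivalently, the one index-$k$ handle meeting $L$ is traded for three handles of indices $l$, $l+1$, $k$, of which only the index-$l$ one meets $L$. This is a birth of an $(l,l+1)$-pair positioned \emph{on the stable disc relative to $L$}, pushing the old index-$k$ point off $L$ --- a move your construction never produces. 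Once this is supplied, your step (a) (or the paper's critical-value rearrangement) finishes the proof.
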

\begin{proof}
Suppose first that  for each  critical point $p$ of the function $\phi|_L$ its  index on $L$ coincides with its index as a  critical point of $\phi$ on the whole $W$.\footnote{In this case the modification of the Lagrangian after passing through the corresponding critical value coincides with the {\em ambient Legendrian surgery} defined by G. Dimitroglou Rizell in \cite{Ri12}.} 
Then  for any critical point $p\in L$ of $\phi$ its stable manifold is contained in $L$, and hence for any critical point $q\notin L$ there are no $X$-trajectories converging to $q$ at the negative direction, and to $p$ at  the positive one.
Hence, using Lemma 9.45 from \cite{CE12}  we can deform $\phi$ without changing $X$ and $\om$ (and hence keeping Weinstein structure tangent to $L$)
so that the critical values corresponding to critical points from $L$ are all smaller than  the critical values corresponding to critical points of $\phi$ which are not in $L$. Then an intermediate regular value $c$ has the required properties, and thus the Weinstein structure is special.

Suppose now that the index $l$ of a  critical point $p$ of the function
$\phi|_L$ is less than the index   $k$ of $p$  for $\phi$ on the whole $W$.
Let $D^k$ be the stable disc of $p$ on $W$ and $D^l=D^k\cap L$ the stable disc of
$p$ for the function $\phi|_L$.
Note that  that there exists a function $\wt \phi :D^k\to \R$ which coincides with $\phi$ on $ D^l\cup \p D^k $, has a critical point  of index $l$ at $p$ and two additional critical points $p', p''$ on  $D^k\setminus D^l$ of indices $l+1$ and $k$, respectively. Hence the attaching of one handle of index $k$ corresponding  to the point $p$ can be replaced by attaching of three handles of indices $l,l+1$, and $k$ corresponding to the points $p, p'$, and $p''$. Moreover, only the  first handle intersects the Lagrangian $L$. Hence,  the claim follows from the already considered case.\end{proof}

The following proposition characterizes regular Lagrangian discs.
\begin{prop}\label{prop:reg-discs} Let $(C,\p C)\subset (W,\p_+W)$ be a  Lagrangian disc with Legendrian boundary. It is regular if and only if there is a Weinstein handlebody representation of the  cobordism $W$ for which $L$ coincides with the co-core Lagrangian disc of one of the index $n$ handles.
\end{prop}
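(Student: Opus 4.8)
We sketch how the argument would go, treating the two implications separately.

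For the ``if'' direction, suppose $W$ admits a Weinstein handlebody in which $C$ is the co-core disc of an index-$n$ handle $H$. After ordering the handles by index --- and reordering the index-$n$ handles among themselves, which is possible since their attaching Legendrian $(n-1)$-spheres are generically disjoint inside the $(2n-1)$-dimensional level sets --- one may assume $H$ is attached last, so that $C$ is literally the co-core of $H$, flowed by the Liouville field up to $\p_+W$. It then suffices to inspect the normalized handle model: for $H=\{|p|,|q|\le 1\}\subset T^*\R^n$ with $\om=dp\wedge dq$ the Liouville field is $X=\sum_i(2p_i\p_{p_i}-q_i\p_{q_i})$, its co-core is $\{q=0\}$, and one checks at once that $X$ is tangent to $\{q=0\}$ while $\iota(X)\om=\sum_i(2p_i\,dq_i+q_i\,dp_i)$ vanishes there. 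Off the handle $C$ is a union of $X$-trajectories, so $X$ is tangent to all of $C$; equivalently $\alpha|_C=0$, the given Weinstein structure is a tangent structure to $C$, and $C$ is regular.

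For the ``only if'' direction, suppose $C$ is regular. First invoke Lemma~\ref{lm:equiv-charact-reg} to pass, through Weinstein structures keeping $C$ Lagrangian, to a special one; as recalled before that lemma, a special tangent structure presents $W$ as $T^*C\cup W_C$ with $C$ the zero section of $T^*C$ and $T^*C$ carrying its canonical Weinstein structure. Since $C$ continues through $W_C$ simply as a union of $X$-trajectories, it is enough to realize the zero section of $T^*C=T^*D^n$, in its canonical structure, as the co-core of an index-$n$ handle of some Weinstein handlebody of $T^*D^n$: the resulting handlebody of $W=T^*C\cup W_C$ then exhibits $C$ as the desired co-core. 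Now $T^*D^n$, with its canonical structure (for a suitable auxiliary Morse function on $D^n$), is a single index-$0$ handle, i.e. the ball $B^{2n}$, and the zero section is a standard Lagrangian $n$-disc with boundary the standard Legendrian unknot $\Lambda_0\subset S^{2n-1}$. The plan is to introduce inside a Darboux ball of $B^{2n}$ containing this disc a cancelling pair of Weinstein handles of indices $n-1$ and $n$ --- a Weinstein homotopy that does not change the domain $B^{2n}$; in the resulting handlebody $B^{2n}=H_0\cup H_{n-1}\cup H_n$ the co-core of the critical handle $H_n$ is again a standard Lagrangian $n$-disc with boundary $\Lambda_0$, and a further Weinstein homotopy together with a Hamiltonian isotopy identifies it with the zero section.

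The step I expect to be the main obstacle is exactly this last identification: arranging the cancelling $(n-1,n)$-pair so that the co-core of $H_n$ is the \emph{standard} Lagrangian disc, and then matching that disc with the zero section of $T^*D^n$ --- in effect, the uniqueness up to Weinstein homotopy and Hamiltonian isotopy of a ``locally modelled'' Lagrangian $n$-disc in $B^{2n}$ with boundary $\Lambda_0$; this is where a dimension hypothesis such as $2n\ge 6$ would be used, if one is needed. Everything else --- the handle reordering in the ``if'' direction, the reduction to the model via Lemma~\ref{lm:equiv-charact-reg}, and the creation of a cancelling handle pair --- should be routine Weinstein handle calculus, as is the bookkeeping point, used in both directions, that a Lagrangian disc tangent to $X$ which near $\p_+W$ agrees with a co-core extends uniquely by the Liouville flow across the complementary cobordism.
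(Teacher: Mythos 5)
Your ``if'' direction is correct and is just a more detailed version of the paper's one-line observation: the standard handle model has $\lambda$ vanishing on the co-core and $X$ tangent to it, and the completed co-core is a union of $X$-trajectories, so the given Weinstein structure is already tangent to $C$ and $C$ is regular. Your reduction in the ``only if'' direction -- pass to a special tangent structure via Lemma~\ref{lm:equiv-charact-reg}, identify $T^*C$ with the ball $B^{2n}$ containing $C$ as an equatorial Lagrangian disc, then create a cancelling pair of critical points of indices $n-1$ and $n$ -- is also the paper's route.

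However, the step you yourself flag as the ``main obstacle'' is a genuine gap, and it is a self-inflicted one. You propose to create the cancelling $(n-1,n)$-pair abstractly inside a Darboux ball and then match the resulting co-core with the zero section ``by a further Weinstein homotopy together with a Hamiltonian isotopy.'' That matching is a uniqueness statement for Lagrangian unknotted discs with Legendrian boundary in $B^{2n}$ up to Hamiltonian isotopy; nothing in routine handle calculus gives it, and the paper itself treats statements of exactly this type as nontrivial (compare Proposition~\ref{prop:semi-flex-discs} on small discs, which requires an argument even under strong hypotheses). The paper avoids the issue entirely: because the special structure is tangent to $C$ (equivalently $\lambda|_C=0$), one performs the birth of the pair of zeros of the Liouville field \emph{along $C$ itself}, by a Weinstein homotopy supported near $C$, so that $C$ serves as the co-core of the new index-$n$ handle by construction -- no identification of two different discs is ever needed. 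The argument is then finished by reordering critical values (Proposition~10.10 of \cite{CE12}) so that this handle is attached last. To repair your proof, replace the ``Darboux ball plus identification'' step by this birth of critical points along $C$; as written, the decisive step is missing, and the dimension hypothesis $2n\geq 6$ you anticipate is not needed for it.
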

\begin{proof} If $C$ is a co-core disc for a Weinstein structure, then this structure is tangent to it, and hence $C$ is regular. Conversely, if $C$ is regular for a Weinstein structure $\fW$, then by Lemma \ref{lm:equiv-charact-reg}  this structure (after Weinstein homotopy of tangent to $C$ Weinstein structures) admits a Weinstein handlebody consisting of the ball $B^{2n}$ with Lagrangian equatorial disc  $C\subset B$ and other Weinsteins handles glued to $\p B\setminus \p C)$. One can deform the Weinstein structure on $B^{2n}$ by creating two additional critical points of index $n$ and $(n-1)$ such that $C$ serves as the co-core disc of the corresponding index $n$ handle. It remains to note that using 
Proposition 10.10  from \cite{CE12}
one can re-order critical points of a Lyapunov function  so that the handle corresponding to the critical point $C$ is the last one to attach.

\end{proof}

The regularity property for Lagrangian submanifolds also has (at least conjecturally) a Lefschetz fibration characterization. E.~Giroux and J.~Pardon have suggested to us that the following can probably be proven along the lines of \cite{GP15}, adapting results of \cite{AMP}: {\em  for any regular Lagrangian submanifold $L\subset W$ with Legendrian boundary there exists a Lefschetz fibration  over $\C$ which projects $L$ to a ray in $ \R\subset \C$.} Of course, the converse statement is true: a Lagrangian with such a Lefschetz presentation is regular.

There are many other natural examples of regular Lagrangians, including the zero
section and cotangent fibers of a cotangent bundle, and more generally smooth
loci of Lagrangian skeleta or ascending Lagrangian co-cores of the flow of $X$.
A necessary condition for the  regularity of a closed  $L$, or more generally of a cobordism $L$ with   $\p_+L=\varnothing$,   is given by the following:
\begin{lemma}\label{lm:nec-reg}
Let $L$ be a  regular Lagrangian cobordism with $\p_+L=\varnothing$. Then 
  the inclusion $H_n (L,\p_-L )\to H_n(W,\p_-W)$ is injective. Here the homology is taken with integer coefficients if $L$ is orientable and with $\Z/2$-coefficients otherwise.   Moreover, in the orientable case the image of the  (relative) fundamental class of  $(L,\p_-L)$ in   $H_n(W,\p_-W)$ is indivisible.
\end{lemma}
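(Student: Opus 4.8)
The plan is to use Lemma~\ref{lm:equiv-charact-reg} to pass to a \emph{special} tangent Weinstein structure $(W,\om,X,\phi)$, with a regular value $c$ as in the definition of ``special'', write $W=W_0\cup W_L$ with $W_0:=\{\phi\le c\}$ (the ``$T^*L$'' piece) and $W_L:=\{\phi\ge c\}$, and then read both assertions off from (a) the coupled handle structure of $W_0$ and (b) the fact that, like any Weinstein cobordism, $W_L$ is built from handles of index $\le n$. The first point to establish is that $\partial_+L=\varnothing$ forces $L\subset\{\phi<c\}$: since $X$ is tangent to $L$, the forward $X$-flow of a point of $L$ stays in $L$; it cannot leave through $\partial W$ (the only part of $\partial W$ it meets is $\partial_-L\subset\partial_-W$, where $X$ points inward, and $L$ is disjoint from $\partial_+W$), so by compactness it converges to a zero of $X$ on $L$, i.e.\ a critical point of $\phi$ lying on $L$; and by the second bullet in the definition of ``special'' all such critical points lie in $\{\phi<c\}$. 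Hence $L$ is disjoint from the top boundary $\{\phi=c\}$ of $W_0$.

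Next I would produce a cellular model. In $W_0$ every Weinstein handle $H_k$ carries its coupled Lagrangian sub-handle $L_k\subset H_k$ \emph{of the same index} (this is exactly where ``special'' is used: $l=k$), and then $L_k$ and $H_k$ share the same core disc. Collapsing all handles onto their cores yields a deformation retraction of pairs $(W_0,\partial_-W)\to(K_0,\partial_-W)$ onto a relative CW complex in which the corresponding model $(K_L,\partial_-L)$ of $(L,\partial_-L)$ sits as a subcomplex \emph{with exactly the same set of cells}. The inclusion then induces an isomorphism of cellular chain complexes --- being a chain map which is a bijection of free generators --- and in particular an isomorphism $H_n(L,\partial_-L)\xrightarrow{\ \cong\ }H_n(W_0,\partial_-W)$ under which the relative fundamental class $[L,\partial_-L]$ goes to the signed sum $\sum_j\eps_j e_j^n$, $\eps_j=\pm1$, of the index-$n$ cores. (Equivalently, $W_0=T^*L$ deformation retracts onto its zero section.)

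Finally I would compare $W_0$ with $W$. Attaching the handles of $W_L$, all of index $\le n$, to $W_0$ gives a CW model $(K,\partial_-W)$ of $(W,\partial_-W)$ obtained from $K_0$ by adjoining cells of dimension $\le n$ only; hence $C^{\mathrm{cell}}_{n+1}(W,\partial_-W)=0$, so $H_n(W,\partial_-W)=\ker\partial_n$ is literally a subgroup of the free abelian group $C^{\mathrm{cell}}_n(W,\partial_-W)=C^{\mathrm{cell}}_n(W_0,\partial_-W)\oplus D_n$, and, since the boundaries of the cells of $K_0$ are unaffected by the new attachments, $\ker\partial_n\cap C^{\mathrm{cell}}_n(W_0,\partial_-W)=\ker\partial_n^{W_0}=H_n(W_0,\partial_-W)$. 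Composing with the isomorphism above shows $H_n(L,\partial_-L)\to H_n(W,\partial_-W)$ is injective, over any coefficient ring (in particular over $\Z/2$ in the non-orientable case). For indivisibility in the orientable case, note that $H_n(L,\partial_-L)=\ker\partial_n^L$ is a direct summand of $C^{\mathrm{cell}}_n(L,\partial_-L)$ (its cokernel $\im\partial_n^L$ is a subgroup of a free group, hence free, so the defining sequence splits), so the fundamental class is a primitive element there; as $C^{\mathrm{cell}}_n(L,\partial_-L)=C^{\mathrm{cell}}_n(W_0,\partial_-W)$ is in turn a direct summand of $C^{\mathrm{cell}}_n(W,\partial_-W)$, the image of $[L,\partial_-L]$ is primitive in $C^{\mathrm{cell}}_n(W,\partial_-W)$, hence indivisible in the subgroup $H_n(W,\partial_-W)$.

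The part I expect to require the most care is the claim in the second paragraph that the coupled handle attachment really does realize $(K_L,\partial_-L)$ as a subcomplex of $(K_0,\partial_-W)$ with matching cells, and that the fundamental class then reads as $\sum_j\pm e_j^n$ --- that is, tracking the attaching maps and the orientation signs --- together with making the flow argument for $L\subset\{\phi<c\}$ rigorous in the cornered/sutured setting. Once the structural presentation $W=T^*L\cup W_L$ recorded in Section~\ref{sec:regularlagn} is granted, the remainder is just the elementary homological algebra of a complex concentrated in degrees $\le n$.
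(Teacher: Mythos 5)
Your argument is correct, and it takes a genuinely different route from the paper's. The paper's proof is a short duality/intersection argument: with the special structure in place (so $W=T^*L\cup W_L$), a generic cotangent fiber $F$ of $T^*L$ has boundary disjoint from the attaching spheres of all handles of $W_L$, hence defines a class $[F]\in H_n(W,\p_+W)$; since $[F]\cdot[L]=\pm1$, the class $[L]\in H_n(W,\p_-W)$ is indivisible (and pairing with fibers over the various components gives injectivity). You instead work at the chain level: after passing to a special tangent structure via Lemma \ref{lm:equiv-charact-reg}, you use that the cores of the handles of $\{\phi\le c\}$ coincide with the cores of the handle decomposition of $L$, and that all handles of a Weinstein cobordism have index $\le n$, so the relative cellular complex of $(W,\p_-W)$ has no $(n{+}1)$-cells; then $H_n=\ker\p_n$ sits inside a free chain group, the image of $[L,\p_-L]$ is a $\pm1$-combination of top cells, and both injectivity and indivisibility are elementary. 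What the paper's approach buys is brevity and a slightly stronger geometric output (an explicit dual class pairing to $\pm1$ with $[L]$); what yours buys is independence from any intersection pairing, uniformity over arbitrary coefficients, and transparency about the only input used (absence of handles of index $>n$), at the cost of the bookkeeping you flag: the containment $L\subset\{\phi<c\}$ (your flow argument via monotonicity of $\phi$ along $X$ and compactness of $L$ is fine) and the cell-for-cell identification of $(L,\p_-L)$ inside $(\{\phi\le c\},\p_-W)$, which is exactly what the paper's presentation $W=T^*L\cup W_L$ packages, since in the coupled model with $l=k$ the Lagrangian sub-handle and the Weinstein handle share the same core disc.
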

\begin{proof}
Assuming the Weinstein structure is special, we observe that a generic fiber $F$ of the cotangent bundle  $T^*L$ has boundary $\p F$ which does not intersect the attaching spheres of any of the additional Weinstein handles, and hence $F$ represents a homology class $[F]\in H_n(W,\p_+W)$. But $[F]\cdot[L]=\pm 1$, and hence the class $[L]\in H_n(W,\p_-W)$ is indivisible.
\end{proof}
The results of \cite{EM-caps} and  \cite{M13} show that this injectivity condition {\em
does not necessarily } hold when 
$ \p_-L $ is loose, or when $ \p_-L =\varnothing $ 
but $(\p_-W,\xi_-)$ is overtwisted, and these therefore provide examples of non-regular Lagrangian cobordisms. 

However, we do not know any counterexample to the positive answer to the following problem:
 
\begin{problem}\label{prob:Liou-Wein}
   Suppose $\p_- W = \varnothing$ (or more generally when  $(\p_-W,\xi_-)$ is
    tight and $\p_-L$ is not loose). Is every Lagrangian cobordism $L\subset W$ regular? In
    particular, does  the conclusion of  Lemma \ref{lm:nec-reg}   hold for such
    $L$? For instance, is the image of the fundamental class of a closed
    Lagrangian manifold in a Weinstein manifold necessarily indivisible (and in
    particular non-zero)?  
\end{problem}

\section{Flexible Lagrangians}\label{sec:flexiblelagn}

\begin{definition}\label{def:flexible}
    We say a Lagrangian cobordism $L\subset W$ is {\em flexible} if it is regular with a
   special tangent Weinstein structure $(W, \om, X, \phi)$  
   for which the complementary Weinstein cobordism $(W_L,\om|_{W_L}, X|_{W_L},\phi|_{W_L})$ is flexible.
\end{definition} 
In the case when $\p_+L\neq\varnothing$ one can equivalently characterize flexibility in terms of tangent but not necessarily special Weinstein structures.
 \begin{lemma}\label{lm:equiv-charact-flexibility}
 A Lagrangian cobordism $L\subset W$  with   $\p_+ L\neq\varnothing$ is flexible if and only it is regular with a tangent to $L$ Weinstein cobordism  structure which admits a partition into elementary cobordisms such that links  of attaching spheres of  index $n$ handles are loose in the complement of $L$.
 \end{lemma}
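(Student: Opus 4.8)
The plan is to prove the two implications separately, with the forward direction (flexible $\Rightarrow$ tangent structure with loose links in the complement of $L$) being essentially immediate from Definition \ref{def:flexible}, and the reverse direction being the substantive content.

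\textbf{Forward direction.} If $L$ is flexible, then by definition there is a \emph{special} tangent Weinstein structure $(W,\om,X,\phi)$ whose complementary cobordism $W_L$ is flexible. By definition of flexibility for $W_L$, it admits a Weinstein handlebody presentation in which every index $n$ handle is attached along a loose Legendrian link (inside the relevant contact level of $W_L$). Concatenating the handle presentation of $T^*L$ (coming from a Morse function on $L$, all of whose handles have index $\leq n$ and whose critical points coincide with those of $\phi$ on $L$) with this presentation of $W_L$ yields a partition of $(W,\om,X,\phi)$ into elementary cobordisms. Since the attaching spheres of the index $n$ handles all belong to the $W_L$ part and $W_L$ lies in $\{\phi \geq c\}$, which is disjoint from $L$ except along $\p_+ L$ (and $\p_+ L$ is pushed off by the cylindrical end), these attaching links are automatically contained in the complement of $L$, and they are loose there since looseness in the ambient contact manifold is inherited from looseness in $W_L$ (the complement of $L$ is strictly larger). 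This proves the ``only if'' direction, and note it did not even use $\p_+ L \neq \varnothing$.

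\textbf{Reverse direction.} Suppose $L$ is regular with a tangent (not necessarily special) Weinstein structure $(W,\om,X,\phi)$ admitting a partition into elementary cobordisms whose index $n$ attaching links are loose in the complement of $L$. First apply Lemma \ref{lm:equiv-charact-reg} to homotope, through tangent-to-$L$ Weinstein structures, to a \emph{special} structure $(W,\om_1,X_1,\phi_1)$; write $W = T^*L \cup W_L$ as in the discussion preceding that lemma. The goal is to show $W_L$ is flexible. The idea is that the homotopy produced by Lemma \ref{lm:equiv-charact-reg} only re-orders critical values and (in the index-dropping case) replaces an index $n$ handle meeting $L$ by three handles of indices $l, l+1, n$ with only the first meeting $L$; the newly created index $n$ handles have attaching spheres that can be taken loose (by the local model, or by a further small stabilization/loosening which is available once we are free to deform the Weinstein structure on the $W_L$ part, and which does not affect tangency to $L$ since it happens away from $L$). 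Meanwhile the original index $n$ handles, which were loose in the complement of $L$ by hypothesis, remain loose after the handle slides and reorderings used to make the structure special — here one uses that looseness is preserved under Legendrian isotopy and under ambient contact isotopy within the complement of $L$. Thus $W_L$ carries a Weinstein handlebody presentation with all index $n$ handles attached along loose links, i.e. $W_L$ is flexible, so $L$ is flexible by Definition \ref{def:flexible}.

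\textbf{Main obstacle.} The delicate point is controlling looseness through the re-ordering and handle-creation steps in the proof of Lemma \ref{lm:equiv-charact-reg}: when critical points are permuted and the index $n$ handle is split into three, one must verify that the attaching sphere of the surviving/created index $n$ handle, viewed in the appropriate contact cross-section of $W_L$, is still a loose Legendrian \emph{in the complement of $L$}, and that the handle slides needed to separate the $L$-handles from the rest (invoking Lemma 9.45 and Proposition 10.10 of \cite{CE12}) take place in the complement of $L$ and hence preserve looseness there. This is where the hypothesis $\p_+ L \neq \varnothing$ is used: it guarantees that the index $n$ handles can be pushed, via the extra room near $\p_+ L$ where $L$ runs out to the boundary along cylindrical ends, entirely into $W_L$ and kept disjoint from $L$ during the reordering, so that no index $n$ handle is ever forced to interact with $L$ — an interaction which would obstruct the loose-link presentation of $W_L$. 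I would carry out this verification by tracking the explicit local models already written down in the excerpt (the coupled handle $H_k \supset L_l$) and appealing to the standard stability of loose charts under Legendrian and contact isotopies.
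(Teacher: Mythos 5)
Your overall architecture is the same as the paper's (the substantive direction is the reverse one, proved by rerunning the proof of Lemma \ref{lm:equiv-charact-reg} while tracking looseness), but there is a genuine gap at exactly the point you single out as the main obstacle. When a critical point $p\in L$ has index $l$ on $L$ strictly smaller than its ambient index $k$, and the corresponding handle is split into handles of indices $l$, $l+1$, $k$ at $p,p',p''$, you propose to make the newly created index $n$ attaching spheres loose ``by the local model, or by a further small stabilization/loosening.'' That move is not available: stabilizing or otherwise ``loosening'' an attaching sphere is not a homotopy of Weinstein structures tangent to $L$ --- it changes the resulting Weinstein structure (the entire flexible/non-flexible dichotomy lives precisely in the Legendrian isotopy class of the attaching data), so you cannot simply declare these spheres loose and still land in the same structure on $W$. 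The paper's argument is concrete and different: only two kinds of new index $n$ handles can appear. If $k=n$, the surviving index $n$ handle at $p''$ is attached along a Legendrian which is loose in the complement of $L$ directly by the hypothesis of the lemma; if $l=n-1$, the new index $l+1=n$ handle at $p'$ is in cancelling position with the index $n-1$ handle at $p$, and an attaching sphere in cancelling position with a subcritical handle is automatically attached along a loose knot. This cancelling-pair mechanism is the key geometric input beyond Lemma \ref{lm:equiv-charact-reg}, and it is absent from your proposal.

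Two secondary points. In the forward direction you remark that $\p_+L\neq\varnothing$ is not used, but your own argument needs it: you claim that all index $n$ attaching spheres belong to the $W_L$ part, which requires $\phi|_L$ to have no index $n$ critical points, and that is exactly what $\p_+L\neq\varnothing$ (for each component of $L$) provides after a preliminary deformation of the structure on the $T^*L$ piece. If $\p_+L=\varnothing$, the top handle of the $T^*L$ part has attaching sphere contained in $L$, so the stated condition cannot hold even though $L$ may well be flexible in the sense of Definition \ref{def:flexible} (e.g. the zero section of a cotangent bundle); this is why the hypothesis appears in the statement, rather than for the ``extra room near $\p_+L$ during reordering'' that you conjecture.
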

 \begin{proof}
The proof repeats the steps of the proof of Lemma \ref{lm:equiv-charact-reg}.
If  for each  critical point $p$ of the function $\phi|_L$ its  index on $L$ coincides with its index as a  critical point of $\phi$ on $W$, then we modify the Weinstein structure into one which is special and tangent to $L$ without changing $\om$ and $X$. For the resulting Weinstein structure the cobordism $W_L$ is automatically flexible.
 
Suppose that the index $l$ of a  critical point point $p$ of the function
$\phi|_L$ is strictly less than the index   $k$ of $p$  for $\phi$ on $W$.
Letting $D^k$ be the stable disc of $p$ on $W$ and $D^l=D\cap L$ the stable disc of
$p$ for the function $\phi|_L$, we modify, as in the proof of Lemma
\ref{lm:equiv-charact-reg} the Weinstein structure by changing the index of $p$ for $\phi$ to $l$ at the expense of creating two new critical points of index $l+1$ and $k$ respectively on the stable  $D^k$.

Finally we observe  that if $k=n$, then   the index $n$ handle corresponding to the point $p''$ is attached along a loose Legendrian by assumption. If $l=n-1$,   the second  index $n$ handle corresponding to the point $p'$ is in canceling position with the $n-1$ index handle corresponding to $p$ 
and hence is also attached along  a loose knot. This implies the flexibility of $L$.

 The opposite implication is straightforward.
 \end{proof}

The next proposition gives two fundamental examples of flexible  Lagrangian submanifolds. Recall that a product  $\fW_1\times \fW_2=(W_1\times W_2, \om_1\oplus\om_2, X_1\oplus X_2,\phi_1\oplus\phi_2)$ of  completed Weinstein cobordisms $\fW_1=(W_1,\om_1,X_1,\phi_1)$ and  $\fW_2=(W_2,\om_2,X_2,\phi_2)$  is again a (completed) Weinstein cobordism  (of manifolds with boundary). 
For a Weinstein cobordism  structure $\fW=(W,\om, X,\phi)$ we denote
$\overline\fW:=(W,-\om, X,\phi)$ and observe that the the structure
$\fW\times\overline\fW$ is tangent to the Lagrangian diagonal $D\subset W\times W$, and hence  $D$ is regular for $\fW\times\overline\fW$.
\begin{prop}\label{prop:flex}
\begin{enumerate}
\item Let $(W,\p_- W,\p_+W)$ be a  flexible Weinstein cobordism and let $\wt W$ denote the result of attaching an $n$-handle to $W$ along a loose  Legendrian knot $\Lambda\subset \p_+W$. Then the co-core disc  $C$ of the attached handle is flexible.
\item   Let $\fW=(W,\om, X,\phi)$  be  a {\em flexible} Weinstein cobordism structure.  Then the diagonal $D$ is flexible for  $\fW\times\overline\fW$.
 \end{enumerate}
\end{prop}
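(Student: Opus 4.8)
\emph{Part (1).}
Regularity of $C$ is immediate from Proposition~\ref{prop:reg-discs}: by construction $\wt W$ carries a Weinstein handlebody presentation --- the given flexible presentation of $W$, followed by the index-$n$ handle attached along $\Lambda$ --- in which $C$ is the co-core disc of the final handle. I would prove flexibility using this same Weinstein structure on $\wt W$. It is already tangent to $C$: the co-core is the unstable manifold of the center of the handle, hence $X$-invariant, and being Lagrangian it satisfies $\alpha|_C=0$ with $\alpha=\iota(X)\om$. Since $\p_+C=\p C=S^{n-1}\neq\varnothing$, Lemma~\ref{lm:equiv-charact-flexibility} reduces the claim to exhibiting a partition of this structure into elementary cobordisms in which every index-$n$ handle is attached along a Legendrian loose in the complement of $C$. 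I would take the partition consisting of the handles of $W$ followed by the final handle: the disc $C$ lies entirely in the final handle and is disjoint from its attaching region $\p_-H_n$, hence from every contact level set at or below that handle, so for the index-$n$ handles of $W$ the condition ``loose in the complement of $C$'' becomes simply ``loose'' (true since $W$ is flexible), and for the final handle it becomes ``$\Lambda$ loose in $\p_+W$'' (true by hypothesis). Thus $C$ is flexible.

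\emph{Part (2): setup.}
Regularity of $D$ for $\fW\times\overline\fW$ was recorded before the statement. To promote this to flexibility I would first put $\fW\times\overline\fW$ in special form, following the proof of Lemma~\ref{lm:equiv-charact-reg}. With the Lyapunov function $\Phi:=\phi\oplus\phi$, the zeros of $X\oplus X$ are the pairs $(x_i,x_j)$ of zeros of $X$; writing $k_i$ for the index of $x_i$, the point $(x_i,x_j)$ has $\Phi$-index $k_i+k_j$, while a diagonal zero $(x_i,x_i)$ has $\Phi$-index $2k_i$ but $\Phi|_D$-index only $k_i$. For each $i$ with $k_i\geq1$ I would replace the handle at $(x_i,x_i)$ by three handles of indices $k_i,\,k_i+1,\,2k_i$, only the first of which --- the one carrying the part of $D$ near $(x_i,x_i)$ --- meets $D$. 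After this modification the zeros of $\Phi$ lying on $D$ have equal index on $D$ and on $W\times W$, so their stable discs lie in $D$; reordering via Lemma~9.45 and Proposition~10.10 of~\cite{CE12} brings all of them below a regular value $c$, yielding a special tangent structure with complementary cobordism $W_D=\{\Phi\geq c\}$.

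\emph{Part (2): the core step.}
It remains to check that $W_D$ is a flexible Weinstein cobordism. Its handles are the off-diagonal products $h_i\times h_j$ with $i\neq j$; the subcritical handles produced by the splittings (indices $k_i+1$ and $2k_i$ for $k_i<n$, both $<2n$); and, for each $i$ with $k_i=n$, the index-$(n+1)$ handle (subcritical, since $n\geq3$ gives $n+1<2n$) together with the top handle $p''_i$ of index $2n$. Hence every index-$2n$ handle of $W_D$ is assembled out of index-$n$ handles of the flexible cobordism $W$ --- either as a product $h_i\times h_j$ of two of them with $i\neq j$, or as the top piece $p''_i$ of the splitting of $h_i\times h_i$. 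Since $W$ is flexible, each $h_i$ with $k_i=n$ is attached along a loose Legendrian, and I would invoke the fact that a product of two critical Weinstein handles attached along loose Legendrians is again attached along a loose Legendrian, and that the diagonal splitting above --- performed near the diagonal, away from a loose chart of that Legendrian --- retains looseness of the top piece. This exhibits $W_D$ as flexible, and therefore $D$ as flexible for $\fW\times\overline\fW$.

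\emph{Expected main obstacle.}
The index bookkeeping and the appeals to Lemmas~\ref{lm:equiv-charact-reg} and~\ref{lm:equiv-charact-flexibility} are routine. The substantive point --- and the step I expect to be the crux --- is the looseness of the critical handles of $W_D$: that taking products of loose-attached critical Weinstein handles, and performing the diagonal splitting of such a product, produces loose-attached critical handles. This is precisely where flexibility of $W$ is used, and it is here that one must quote, or reprove via an explicit loose chart in the product handle, the relevant result on loose Legendrians (cf.~\cite{M11,CE12}).
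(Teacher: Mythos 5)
Your part (i) is correct and is essentially the paper's argument: the paper disposes of it with the one line ``immediate from Lemma~\ref{lm:equiv-charact-flexibility}'', and your spelled-out version (the co-core is $X$-invariant, hence the structure is tangent to it, and it is disjoint from all the level sets in which the attaching spheres of $W$ and the sphere $\Lambda$ live, so ``loose in the complement of $C$'' reduces to ``loose'') is exactly what that line means.

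For part (ii) there is a genuine gap at the step you yourself flag as the crux. Your reduction — split each diagonal critical point $(x_i,x_i)$ as in Lemma~\ref{lm:equiv-charact-reg} into handles of indices $k_i,\,k_i+1,\,2k_i$, reorder to get a special tangent structure, and observe that the index-$2n$ handles of $W_D$ are the off-diagonal products $h_i\times h_j$ with $k_i=k_j=n$ together with the top pieces $p''_i$ — is fine, and in fact more careful about the definitions than the paper's two-sentence sketch. But the assertion you then ``invoke,'' that a product of two critical Weinstein handles attached along loose Legendrians is again attached along a loose Legendrian, is not a quotable fact from \cite{M11} or \cite{CE12}, and as phrased it is not even well formed: the attaching sphere of $h_i\times h_j$ is not a product of the two Legendrian attaching spheres (there is no such Legendrian product; the relevant sphere is $\p(D^n\times D^n)\cong S^{2n-1}$, assembled from $\p D^n\times D^n$ and $D^n\times\p D^n$), and producing a loose chart for this Legendrian in the contact level set of $W\times\overline W$ is precisely the substantive work. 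This is the content of Murphy--Siegel \cite[Proposition~3.7]{MS15} (the product of a flexible Weinstein cobordism with any Weinstein cobordism is flexible), which is what the paper cites at exactly this point, together with the observation that the construction can be carried out with the loose charts disjoint from $D$. Your proposal also only addresses the ``away from $D$'' requirement for the diagonal splitting handles, whereas the off-diagonal products $h_i\times h_j$ can have attaching spheres meeting the cylindrical collar of $D$ in their level set, so the disjointness of their loose charts from $D$ needs the same by-construction argument. In short, the bookkeeping is right, but the core looseness statement is left unproven and mis-attributed; either quote \cite[Proposition~3.7]{MS15} (as the paper does) or carry out the explicit loose-chart construction in the product handle.
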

  \begin{proof}
      (i)  This is immediate from Lemma \ref{lm:equiv-charact-flexibility}.

(ii) To show that $D$ is flexible, we must
show that all index $2n$ handles determined by $\phi \oplus \phi$ are attached
along Legendrians which are loose in the complement of $D$. 
The  proof of this fact is essentially identical   to the proof  that the product of a flexible Weinstein manifold with any other
Weinstein manifold is always flexible (and moreoever, in the case of $\fW\times\overline\fW$, 
one can ensure by construction that the loose charts stay away from $D$). This folkloric  statement  was known for a while  to several specialists and recently was proven by Murphy-Siegel \cite[Proposition 3.7]{MS15}.
%
%
 \end{proof}
\begin{problem}\label{prob:converse-to-flex}
Is the converse to each  of the statements in Proposition \ref{prop:flex} true? In other words, is it true that
\begin{enumerate}
\item If a Lagrangian co-core of an $n$-handle  is flexible, then its attaching Legendrian sphere is loose? Moreover, can a flexible Weinstein cobordism remain
flexible after attaching an $n$-handle along a non-loose  Legendrian knot? 
\item If the diagonal in the product $\fW\times\overline\fW$
    is flexible, then 
    $\fW$ is flexible.
\end{enumerate}
\end{problem}

\begin{problem}\label{prob:around-nearby}
  ({\em around the nearby Lagrangian conjecture})
  \begin{enumerate}
 \item  Are all regular closed Lagrangians in $T^*M$ 
  Hamiltonian isotopic? 

 \item  Let $L\subset W$ be a flexible closed Lagrangian in a Weinstein domain $W$. Are all (regular) closed Lagrangian submanifolds  in $W$ Hamiltonian isotopic to $L$?
   {\rm  To tie this to (i), we note that the $0$-section in a cotangent bundle $T^* M$ is tautologically flexible. } 
   \end{enumerate}
\end{problem}

\section{Existence and classification of flexible Lagrangians}\label{sec:proof}
By a {\em  formal parameterized Lagrangian cobordism} in $(W,\p_-W,\p_+W)$ we mean a 
pair
 $(f,\Phi_t)$, where $f:(L,\p_-L,\p_+L)\to (W,\p_+W, \p_-W)$ is  a smooth embedding of an $n$-dimensional  cobordism $(L,\p_-L,\p_+L)$, and   $\Phi_t:TL\to TW$, $t\in[0,1]$,  is a homotopy of injective homomorphisms   such that
\begin{description} 
\item{(i)} $\Phi_0=df$;
\item {(ii)} $\Phi_1$ is a Lagrangian homomorphism, i.e. $\Phi_1(T_xL)\subset T_xW$ is a  Lagrangian  subspace for all $x\in L$;
\item{(iii)}  $\Phi_1|_{TL|_{\p_\pm L}}\subset   {\rm Span}(X,\xi_\pm)$;
\item{(iv)} $\Phi_t(T (\p L)) \subset T (\p W)$; and
\item{(v)} $\Phi_t|_{TL|_{\p L}}$ is transverse to $\p W$ for all $t\in[0,1]$.
\end{description}
A genuine  parameterized Lagrangian  cobordism
$f:(L,\p_-L,\p_+L)\to (W,\p_+W, \p_-W)$ can be viewed as formal by setting $\Phi_t\equiv df$, $t\in[0,1]$.
Two formal  parameterized Lagrangian cobordisms are formally Lagrangian isotopic if they are isotopic through formal   parameterized Lagrangian cobordisms. 
Note that a formal parameterized Lagrangian cobordism has well defined {\em formal 
Legendrian classes} of  its positive and negative boundaries.
If $(L,\p_-L,\p_+L)\subset (W,\p_-W,\p_+W)$ is a subcobordism and $f$ is the inclusion map $(L,\p_-L,\p_+L)\hookrightarrow (W,\p_-W,\p_+W)$  then we will  drop the word ``parameterized" from the term  ``formal parameterized Lagrangian cobordism".
\begin{remark}\label{rem:2-formal}
One can define a weaker notion of a formal  parameterized Lagrangian cobordism by dropping the transversality condition (v) from the definition. We note, however,  that every homotopy class of weak formal Lagrangians contains a unique homotopy class of strong ones. Indeed, the obstructions to deform a weak to a strong one lie  in the homotopy groups $\pi_k(S^{2n-1})$, where $k\leq n$. But $n<2n-1$ for $n>1$, and hence obstructions vanish.
\end{remark}
\begin{thm}\label{thm:main}
\begin{enumerate}
     \item  { \em (Existence)}
In a flexible  Weinstein cobordism $(W, \om)$, any  formal Lagrangian cobordism  with  non-empty positive boundary of each of its components
is formally Lagrangian isotopic to a   flexible genuine  Lagrangian cobordism.

\item   {\em  (Uniqueness)} Let
\begin{align*}
&f_0: (L_0,\p_-L_0,\p_+,L_0)\subset (W_0,\p_-W_0,\p_+W_0)\;\;\hbox{and}\;\\
 &f_1:(L_1,\p_-L_1,\p_+,L_1)\subset (W_1,\p_-W_1,\p_+W_1)
\end{align*}
 be  two flexible Lagrangian cobordisms.
Then given  diffeomorphisms $h:W_0\to W_1$  and $g:L_0\to L_1$ such that
$h\circ f_0=f_1\circ g$ and   $f^*\om_1$ is homotopic to $\om_0$ via a homotopy of non-degenerate not necessarily closed $2$-forms vanishing on $f_0(L_0)$, there exists an isotopy of $h$   to a symplectomorphism $\wt h :W_0\to W_1$ such that  $\wt h\circ f_0=f_1\circ g$ . 
\end{enumerate}
    \end{thm}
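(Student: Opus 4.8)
The plan is to deduce both statements from the parametric $h$-principle for flexible Weinstein cobordisms (the existence and homotopy-uniqueness results of Cieliebak--Eliashberg \cite{CE12}, together with the loose-Legendrian $h$-principle of Murphy \cite{M11}), applied not to $W$ itself but to the \emph{complementary Weinstein cobordism} $W_L$ in a special tangent structure. The guiding principle is the presentation $W = T^*L \cup W_L$ furnished by Lemma \ref{lm:equiv-charact-reg}: a flexible Lagrangian is nothing more than the canonical zero-section-type piece $T^*L$ glued to a flexible Weinstein cobordism along the Legendrian $\partial_+(T^*L)$, so constructing or comparing flexible Lagrangians reduces to constructing or comparing flexible Weinstein structures on the complement, rel the fixed piece $T^*L$.

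\textbf{Existence.} Starting from a formal Lagrangian cobordism $(f,\Phi_t)$ with every component having nonempty positive boundary, I would first use the formal data to build, near $f(L)$, a standard model: condition (iii) lets me arrange the Lagrangian homomorphism $\Phi_1$ along $\partial_\pm L$ to lie in $\mathrm{Span}(X,\xi_\pm)$, so that a neighborhood of $f(L)$ can be identified with (a piece of) $T^*L$ carrying its canonical Weinstein structure, with $L$ as the zero section; the homotopy $\Phi_t$ is exactly the formal data needed to do this compatibly with the ambient almost symplectic structure. The complement $W \setminus \mathrm{Op}\, T^*L$ then inherits a formal Weinstein cobordism structure (an almost symplectic structure plus the handle/Morse data), and because $W$ was flexible to begin with one checks this formal complementary structure is formally homotopic to a genuine flexible one; here the hypothesis $\partial_+ L \neq \varnothing$ on each component is what guarantees the Lagrangian handles (the $T^*L$ part) can be slid below, i.e.\ that $\phi|_L$ can be chosen with no maximum, so that the special position of Lemma \ref{lm:equiv-charact-reg} is attainable and the complement is genuinely a \emph{cobordism} with nonempty negative boundary on which the $h$-principle applies. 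Applying the existence $h$-principle for flexible Weinstein cobordisms rel $\partial_- = \partial_+(T^*L)$ produces a genuine flexible Weinstein structure on the complement; regluing gives a genuine Weinstein structure $(W,\om,X,\phi)$ tangent to a genuine Lagrangian $L'$ isotopic to $f(L)$ through formal Lagrangians, with flexible complement --- that is, a flexible Lagrangian in the sense of Definition \ref{def:flexible}, in the given formal class.

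\textbf{Uniqueness.} Given the two flexible Lagrangians, I would first pass, via Lemma \ref{lm:equiv-charact-reg} (applied inside the respective homotopy classes of tangent Weinstein structures), to special tangent structures, obtaining presentations $W_i = T^*L_i \cup W_{L_i}$ with $W_{L_i}$ flexible. The diffeomorphisms $g,h$ identify the $T^*L$ pieces canonically (a diffeomorphism of the base induces a symplectomorphism of cotangent bundles), so the problem is reduced to comparing the two flexible complementary Weinstein cobordisms $W_{L_0}$ and $W_{L_1}$ rel their common negative boundary. The form-homotopy hypothesis --- $f_0^*\om_1 \simeq \om_0$ through nondegenerate $2$-forms vanishing on $f_0(L_0)$ --- is precisely the formal input needed to see that $h|_{W_{L_0}}$ is a formal Weinstein isomorphism onto $W_{L_1}$ (the vanishing-on-$L$ condition ensures the homotopy is compatible with the splitting along $\partial_+(T^*L)$). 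Then the uniqueness part of the $h$-principle for flexible Weinstein cobordisms deforms $h|_{W_{L_0}}$, rel boundary, to a symplectomorphism; combining with the canonical identification on $T^*L$ and smoothing along the gluing hypersurface yields the desired $\wt h$ with $\wt h \circ f_0 = f_1 \circ g$.

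\textbf{Main obstacle.} The routine-looking but genuinely delicate point is the \emph{gluing/normal-form step}: turning the formal data $(\Phi_t)$ and the form-homotopy hypothesis into an honest identification of a neighborhood of $L$ with a piece of $T^*L$ \emph{compatibly} with the decomposition into $T^*L$ and the complementary cobordism, and then ensuring that the $h$-principle on the complement can be run \emph{relative} to the resulting contact boundary $\partial_+(T^*L)$ without disturbing the fixed piece. One must check that the looseness of the index-$n$ attaching links is preserved under this relativization (i.e.\ that the loose charts can be kept in the complement of $L$, exactly as in the proof of Proposition \ref{prop:flex}(ii)), and that the Weinstein homotopies produced by \cite{CE12} can be taken fixed near $\partial_-$. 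Controlling the interplay between the Hamiltonian-isotopy notion of equivalence for Lagrangian cobordisms (preserving $X$ at infinity) and the Weinstein-homotopy moves is where the care is required; everything else is an application of the cited $h$-principles.
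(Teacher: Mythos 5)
Your overall architecture (identify a neighborhood of $f(L)$ with a piece of $T^*L$ using the formal data, give the complement $W_L$ a flexible Weinstein structure in the induced formal class, and compare/reglue via the $h$-principles of \cite{CE12}) is the same as the paper's. But there is a genuine gap at the heart of the existence argument: you never establish that $W_L$ admits \emph{any} Weinstein cobordism structure, i.e.\ a defining Morse function with all critical points of index $\leq n$, which is a prerequisite for invoking the existence theorem for flexible Weinstein structures (Theorem 13.1 of \cite{CE12}). This is exactly where the hypothesis $\p_+L\neq\varnothing$ on each component enters in the paper: one proves by a transversality/obstruction argument that the pair $(W_L,\p_+W_L)$ is $(n-1)$-connected --- the only potentially nontrivial relative classes are the small $(n-1)$-spheres linking $L$, and $\p_+L\neq\varnothing$ lets one push such a sphere and its connecting path into $\p_+W_L$ --- and then Whitehead--Smale handle trading produces the required Morse function. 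Your substitute justification (``the complement is genuinely a cobordism with nonempty negative boundary on which the $h$-principle applies'') is not sufficient and would prove too much: for a closed component $L$ the complement still has nonempty negative boundary (the unit cotangent boundary of $\Op L$), yet the conclusion is false, since a flexible Weinstein domain carries no closed exact Lagrangians. Relatedly, flexibility of the ambient $W$ is not what makes the complementary structure flexible (that comes for free from the existence theorem, which produces flexible structures in any formal class once the index condition holds), so your appeal to it at that point does no work.

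Where flexibility of $(W,\om)$ \emph{is} needed is in the step you elide by writing the reglued structure as $(W,\om,X,\phi)$: gluing the canonical subcritical structure on $\Op L$ to the flexible structure on $W_L$ yields a \emph{new} flexible Weinstein structure $\eta$ on $W$, only formally (almost symplectically) homotopic to $\om$. To obtain a flexible Lagrangian in the original $(W,\om)$ one must still apply the uniqueness $h$-principle for flexible Weinstein structures (Theorem 14.3 together with Proposition 11.8 of \cite{CE12}) to both $\eta$ and the original flexible $\om$, producing a diffeotopy $h_t$ from the identity to a symplectomorphism $h_1:(W,\eta)\to(W,\om)$, and then take $h_1\circ f$; this is also what keeps the result in the prescribed formal Lagrangian isotopy class. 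Your uniqueness sketch, by contrast, is essentially the paper's argument (Lagrangian neighborhood theorem on $\Op\wh L_0$, then Theorem 14.3/Proposition 11.8 applied to the flexible complements rel the fixed cotangent piece) and is fine modulo the same care about working relative to $\Op L_0$.
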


\begin{remark}\begin{enumerate}
\item Note that the condition  $\p_+ L\neq0$ in (i)  is essential. For example, a flexible Weinstein manifold has no compact exact Lagrangians, even though it may have many formal compact Lagrangians. 

\item
    If  the formal embedding in Theorem \ref{thm:main}(i) is genuine Lagrangian near $\p_-L$  then  the formal homotopy can be constructed  fixed near $\p_-L$.

\item
We do not know whether the uniqueness part of Theorem \ref{thm:main} holds up to a  {\em Hamiltonian isotopy}, i.e. whether formally Lagrangian isotopic flexible Lagrangians are Hamiltonian isotopic.\footnote{Proposition \ref{prop:semi-flex-discs} below provides a partial result in this direction.}  
\item
The formal Lagrangian  isotopy  in Theorem \ref{thm:main}(i) need not be $C^0$-small, because the proof relies on the 
 flexibility of the ambient structure. The situation here is similar
  to that of Theorem 7.19 of \cite{CE12}: any formal Legendrian embedding in an overtwisted contact manifold is formally Legendrian isotopic to a genuine Legendrian embedding, but the isotopy need not be $C^0$-small.

  \end{enumerate}
  \end{remark}
 
  \begin{proof}[Proof of Theorem \ref{thm:main}]
      
To prove the existence part,
 let $(j,\Phi_t):(L,\p_-L,\p_+L)\hookrightarrow(W,\p_-W,\p_+W)$ be a formal Lagrangian submanifold and $(U,\eta)$ denote  a tubular  neighborhood of  $L$ in $T^*L$ with its canonical symplectic structure $\eta=d(pdq)$.
 There exist  an extension of $j$ to an embedding $\wh j :U\to W$  and   a homotopy  $\wh \Phi_t:TU\to TW$, $t\in[0,1]$, of fiberwise isomorphisms extending 
 $\Phi_t$, 
 such that $\wh \Phi_0=d\wh j$ and  $\wh \Phi_1$ is a symplectic bundle isomorphism $(TU,\eta)\to (TW,\om)$.  The homotopy $(\Phi_t)_*\eta$ (:= $(\Phi_t^{-1})^* \eta$) of   non-degenerate but not necessarily closed $2$-forms on $\wh j(U)$ extends to  a homotopy $\om_t$, $t\in[0,1]$, of non-degenerate   $2$-forms on $W$ such that $\om_1=\om$ and $\wh j^*\om_0=\eta$.    In particuar, $\om_0$ is a genuine symplectic structure on a neighborhood of $L$ and $ L$ is Lagrangian. We denote by $W_L:= (W_L, (\omega_0)|_{W_L})$ the complement of $\wh j(U)$ with its induced formal symplectic structure. We wish to now show that $W_L$ admits a Weinstein structure in the same almost symplectic class as $(\omega_0)|_{W_L}$ relative boundary.

The condition $\p_+ L\neq \varnothing$ for each component of $L$ implies that $\pi_j(W_{  L},\p_+W_{  L})=0$ for $j\leq n-1$.
Indeed, the pair $(W,\p_+W)$ is $(n-1)$-connected because the cobordism $W$ is Weinstein. 
Hence,    any relative spheroid $\psi: (D^j,\p D^j)\to (W_{  L},\p_+W_{  L})$ extends for $j\leq n-1$ to a spheroid $\Psi:(D^{j+1}_+,\p_-D_+^{j+1})\to (W,\p_+W)$, 
where we denoted  $$D^{j+1}_+:=\left\{\sum\limits_{k=1}^{j+1} x_k^2 \leq 1,\, x_{j+1}\geq0\right\}\subset\R^{j+1},\; \p_-D_+^{j+1}=D^{j+1}_+\cap\{x_{j+1}=0\}$$ and identified the  the upper-half sphere 
$\left\{\sum\limits_{k=1}^{j+1}x_k^2 = 1,\, x_{j+1}=0\right\}=\p D^{j+1}_+\setminus \Int(\p_-D^{j+1}_+)$ with the disc $ D^j$. 
 Assuming without loss of generality that $\Psi$ is transverse to $L$ we conclude that if $j<n-1$ then $\Psi(D^{j+1}_+)\subset W_L$, and hence the homotopy class  $[\psi]\in \pi_j(W_L,\p_+W_{ L})$ is trivial, and if $j=n-1$ the image 
 $\Psi(D^{j+1}_+)$ intersects $L$ transversely in finitely many points.
 Hence, $\pi_{n-1}(W_{ L},\p_+W_{  L})$ is generated by   small $(n-1)$-spheres $S$ linked with $L$ and transported to the base point in $\p_+W_{L}$ by some paths in $W_{  L}$.
 Moreover, the condition $\p_+L\neq\varnothing$ for each connected component of  $L$ allows us to choose $S\subset\p_+W_{ L}$ and the condition $\pi_1(W_{L},\p_+W_L))=0$ provides a homotopy of the connecting path to $\p_+W_{ }$, i.e. the generating relative  spheroid is trivial in $\pi_{n-1}(W_{ L},\p_+W_{ L})$. Hence, the pair $(W_{ L},\p_+W_{  L})$ is $(n-1)$-connected. Then the classical  Whitehead-Smale's handle exchange argument, see \cite{Smale}, allows us to construct a defining function on the cobordism $W_{  L}$ without critical points of index $\geq n$.
 
  Using  Theorem 13.1 from  \cite{CE12}, which holds for cobordisms with corners, 
  we construct a flexible Weinstein cobordism structure on $W_{L}$ which  agrees with the standard symplectic structure on (the boundary of) the given neighborhood $\Op L:= \wh j(U)$ of the Lagrangian $ L$. Together with the canonical subcritical Weinstein structure on $\Op L$, it yields a flexible Weinstein structure $\eta$ on $W$ which is in the same almost symplectic homotopy class as the original symplectic structure on $W$. Using Theorem 14.3 and Proposition 11.8  from \cite{CE12}, we can   
 construct  a diffeotopy $h_t:W\to W$ connecting  the identity  $h_0=\Id$ with a symplectomorphism $h_1:(W,\eta)\to(W,\om)$. Then the parameterized Lagrangian cobordism  $h_1\circ f:(L,\p_-L,\p_+L)\to(W,\p_-W,\p_+W)$ is  in the prescribed formal class.
     
 To prove the second part denote  $\wh L_0:=f_0(L_0)$ and $\wh L_1:=f_1(L_1)$.
Let us observe that
 the Lagrangian neighborhood theorem allows us to    assume that $h$ is symplectic on $\Op \wh L_0$. Let $\fW_0$, and $\fW_1$ denote the given  Weinstein structures on $W_0$ and $W_1$. By assumption the Weinstein structures $h_*\fW_0$   and $\fW_1$ restricted to $W_{\wh L_1}$  are in the 
 same   relative  to  $\p_-W$   almost symplectic class, and hence according to  Theorem   14.3 and Proposition 11.8 from  \cite{CE12}  $h$ is isotopic to a symplectomorphism  $\wt h: W\to W$ via an isotopy fixed on $\Op L_0$, and in particular, we have $\wt h\circ f_0=g\circ f_1$.
\end{proof}
%
\begin{remark}\label{remark:boundaryofflexible}
An interesting aspect of Theorem \ref{thm:main} is that when $\p_- W
=\varnothing$, the positive Legendrian boundaries of  a flexible Lagrangians  $L$ necessarily cannot
be {\it loose} in the sense of \cite{M11} (as they are filled by exact
Lagrangians), and indeed must have non-trivial holomorphic curve invariants. For
instance, the wrapped Floer homology $WFH_*(L,L; W)$ of $L$ must be 0 by i.e., Lemma
\ref{vanishingwrapped}, or more directly, one can note that $W$ is flexible, hence
$SH_*(W)=0$ and $WFH_*(L,L; W)$ is a unital module over $SH_*(M)$.  It follows
\cite{E12} that there is an isomorphism between the (linearized) Legendrian
contact homology $WFH^+_*(\partial L)$ and the relative homology $H_*(L,
\partial L)$. See Problem \ref{prob:memory} for further discussion.
\end{remark}

\begin{corollary}\label{cor-3-manifold} 
Let $L$ be an  $n$-manifold with
non-empty boundary, equipped with a fixed trivialization $\eta$ of its
complexified tangent bundle $TL\otimes\C$. 
Then there exists a flexible Lagrangian embedding with Legendrian boundary
$(L,\p L)\to (B^{2n},\p B^{2n})$ where $B^{2n}$ is the standard symplectic
$2n$-ball,  realizing the trivialization $\eta$. In particular, any 
  $3$-manifold with boundary can be realized as a flexible Lagrangian submanifold of $B^6$ with Legendrian boundary in $\p B^6$.
\end{corollary}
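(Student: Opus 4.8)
\emph{Overall strategy.} I would obtain the corollary as a direct application of the existence part of Theorem~\ref{thm:main}, taking as ambient flexible Weinstein cobordism the standard ball $W=B^{2n}$, $n\geq 3$. Since $B^{2n}$ is built from a single $0$-handle it has no index-$n$ handles and is therefore (vacuously) flexible, with $\partial_-W=\varnothing$. So it suffices to construct a formal parameterized Lagrangian cobordism $(f,\Phi_t)$ in $B^{2n}$ with $\partial_-L=\varnothing$, $\partial_+L=\partial L$, whose underlying bundle data realize the prescribed trivialization $\eta$ of $TL\otimes\C$: Theorem~\ref{thm:main}(i), applicable because each component of $L$ then has non-empty positive boundary, replaces $(f,\Phi_t)$ by a formally isotopic genuine flexible Lagrangian cobordism, and since formal isotopy preserves the complex trivialization class of $TL$ this embedding still realizes $\eta$.

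\emph{The smooth embedding.} As $\dim L=n<2n$, Whitney's theorem together with a standard collar argument produces a neat (proper) smooth embedding $f\colon(L,\partial L)\hookrightarrow(B^{2n},\partial B^{2n})$ meeting $\partial B^{2n}$ transversally exactly along $\partial L$; after a $C^0$-small isotopy fixed on $\partial L$ I would arrange $f(L)$ to be tangent to the radial Liouville field near $\partial L$, as in Section~\ref{sec:regularlagn}. Because $B^{2n}$ is a ball, the choice of $f$ is immaterial. Conditions (iv) and (v) hold for $df$ by construction and can be maintained throughout, cf.\ Remark~\ref{rem:2-formal}.

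\emph{The formal homotopy.} Identifying $TB^{2n}|_L$ with the trivial Hermitian bundle $\underline{\C^n}$, a Lagrangian bundle monomorphism $TL\to\underline{\C^n}$ is the same datum as a complex trivialization of $TL\otimes\C$ (for a Lagrangian $\ell\subset\C^n$ one has $\ell\oplus i\ell=\C^n$, and the space of Lagrangian monomorphisms $\R^n\to\C^n$ deformation retracts onto $U(n)\hookrightarrow GL_n(\C)$, a homotopy equivalence). Hence $\eta$ determines a Lagrangian monomorphism $\Phi_1\colon TL\to\underline{\C^n}$, well defined up to homotopy within this class. I would further arrange, along $\partial L$, that $\Phi_1$ carries $TL|_{\partial L}$ into $\Span(X,\xi_{\std})$, i.e.\ that $\Phi_1|_{\partial L}$ is a formal Legendrian datum for $\partial L$ in $(\partial B^{2n},\xi_{\std})$; this is possible compatibly with $\eta$ because the restriction of $\eta$ exhibits $T(\partial L)\otimes\C$ and $\xi_{\std}|_{\partial L}$ as stably trivial complex bundles of rank $n-1$ over the $(n-1)$-complex $\partial L$, hence trivial and isomorphic, while the residual adjustment of $\Phi_1$ near $\partial L$ is governed by obstruction groups below the connectivity of the pertinent Stiefel manifolds. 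Finally, $df$ and $\Phi_1$ are two sections over $L$ of the bundle $\Mon(TL,\underline{\C^n})$ whose fiber is the Stiefel manifold $V_n(\R^{2n})$, which is $(n-1)$-connected; since $L$ deformation retracts onto a complex of dimension $\leq n-1$, all obstructions to a homotopy rel the already-fixed boundary data lie in groups $H^k(L,\partial L;\pi_k V_n(\R^{2n}))$ with $k\leq n-1$ and therefore vanish. This yields $\Phi_t$ with $\Phi_0=df$, $\Phi_1$ Lagrangian and satisfying (iii), and $(f,\Phi_t)$ is the desired formal parameterized Lagrangian cobordism.

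\emph{Conclusion and the main obstacle.} Applying Theorem~\ref{thm:main}(i) to $(f,\Phi_t)$ gives the first assertion. For the final sentence, take $n=3$ (so $2n=6\geq 6$): an orientable $3$-manifold $L$ with non-empty boundary is parallelizable, hence $TL\otimes\C$ is trivial and a trivialization $\eta$ exists, so the construction applies to every such $L$. I expect the only genuinely delicate point to be the boundary bookkeeping in the third paragraph --- arranging $\Phi_1$ to satisfy the formal Legendrian condition (iii) along $\partial L$ consistently with the global class $\eta$, and threading this through the homotopy connecting $df$ to $\Phi_1$ --- but this is purely obstruction-theoretic, with all obstructions occurring below the connectivity of the Stiefel manifolds in play.
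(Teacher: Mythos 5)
Your overall strategy---start from an arbitrary neat smooth embedding $(L,\p L)\hookrightarrow(B^{2n},\p B^{2n})$, manufacture the formal Lagrangian data from $\eta$ by obstruction theory, and then quote Theorem \ref{thm:main}(i)---is viable, and it is genuinely different from the paper's route: there one first produces a genuine Lagrangian immersion realizing $\eta$ via Gromov's h-principle for Lagrangian immersions, then uses Whitney's cancellation technique (available because $\p L\neq\varnothing$) to smoothly homotope the immersion to an embedding, which inherits the formal Lagrangian structure; only then is Theorem \ref{thm:main} invoked. Your identification of Lagrangian monomorphisms $TL\to\underline{\C}^n$ with trivializations of $TL\otimes\C$, and the adjustment to achieve (iii) along $\p L$, are fine and amount to the paper's description via a map $\phi:L\to U(n)$ with $\phi(\p L)\subset U(n-1)$.

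However, the step you yourself flag as the crux is justified by an incorrect obstruction count. If you first fix the homotopy of the boundary data over $\p L$ and then seek a homotopy of sections of $\Mon(TL,\underline{\C}^n)$ \emph{rel} $\p L$, the obstructions lie in $H^k(L,\p L;\pi_k(V_n(\R^{2n})))$ for $k\le n$, not $k\le n-1$: you may not invoke the deformation retraction of $L$ onto an $(n-1)$-complex while keeping $\p L$ fixed, and the top group $H^n(L,\p L;\pi_n(V_n(\R^{2n})))$ is in general nonzero (for connected $L$ it is a copy or quotient of $\pi_n(V_n(\R^{2n}))$, which is $\Z$ or $\Z/2$). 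So the vanishing you assert fails as stated, and the argument has a gap exactly at its load-bearing point. The gap is reparable: over $\p L$ conditions (iv)--(v) cut out a subspace of monomorphisms homotopy equivalent to $V_{n-1}(\R^{2n-1})$, and the fibration $V_{n-1}(\R^{2n-1})\to V_n(\R^{2n})\to S^{2n-1}$ shows that the relevant relative homotopy groups are $\pi_k(S^{2n-1})=0$ for $k\le n<2n-1$ (the same mechanism as in Remark \ref{rem:2-formal}); equivalently, $\pi_n(V_{n-1}(\R^{2n-1}))\to\pi_n(V_n(\R^{2n}))$ is surjective, so the top rel-boundary obstruction can always be killed by re-choosing the homotopy over $\p L$. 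One of these arguments must be supplied for your proof to close; alternatively, following the paper, the issue disappears because the regular homotopy from the Lagrangian immersion to the embedding furnishes the homotopy $\Phi_t$ directly.
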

\begin{proof}
    With respect to a reference trivialization $f: TL \otimes \C \cong L \times
    \C^n$, the trivialization $\eta$ is equivalent to the data of a   map
    $\phi:L\to U(n)$ such that $\phi(\p L)\subset U(n-1)\subset U(n)$. Given
    such data, Gromov's h-principle for Lagrangian immersions produces a
    Lagrangian immersion $f:(L,\p L)\to (B^{2n},\p B^{2n})$ transverse to $\p
    B^{2n}$.
  Moreover, using Whitney's cancellation technique and the fact that $\p
  L\neq\varnothing$ we can regularly (but not symplectically) homotope $f$ to
  an embedding $f'$.  The resulting embedding $f'$ inherits a formal
  Lagrangian structure from the immersion $f$.  We then complete the proof
  using Theorem \ref{thm:main}.

\end{proof}
To explore further consequences  of the above constructions we first recall a theorem of Mich\`ele Audin. Given a connected closed n-dimensional manifold  $L$ and an immersion $f:L\to\R^{2n}$ with transverse double points we denote by $d(f)$ the algebraic count of double points. This is an integer if $L$ is orientable and $n$ is even  and  an element of $\Z/2$ otherwise. $d(f)$ is an invariant of the regular homotopy class of $f$ which vanishes if and only if the class contains an embedding.   For a  closed connected  manifold $L$  of dimension $n=2k+1$ we denote by $\chi_{\frac12}(L)$ Kervaire's semi-characteristic 
\[
\chi_{\frac 12}(L):=\sum\limits_{i=0}^k \rank H_i(L)\ (\mathrm{mod}\ 2).
\]
A relationship between $\chi_{\frac 12}(L)$ and $d(L)$ is given (in nice cases) by the following result:
\begin{theorem}\label{thm:Audin}{\rm[M. Audin, \cite{Audin}]}
Let $L$ be a closed  manifold of odd dimension $\neq 1,3$.
Then for any Lagrangian immersion $f:L\to\R^{2n}$ with transverse double points, we have  $d(f)=\chi_{\frac12}(L)$ at least in the following cases:
\begin{enumerate}
\item $L$ is stably parallelizable;
\item $n=4k+1$ and $L$ is orientable;
\item $n=8k+3, k\neq 2^q$ and $L$ is spin.
\end{enumerate}

\end{theorem}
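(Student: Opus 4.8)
\textbf{Proof proposal for Theorem \ref{thm:Audin} (M.\ Audin).}

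The plan is to use Gromov's $h$-principle for Lagrangian immersions to reduce the computation of $d(f)$ from a symplectic question to a purely homotopy-theoretic one, and then to identify the resulting homotopy invariant with the semi-characteristic using classical characteristic-class computations. First I would recall that, by Gromov--Lees, the regular homotopy classes of Lagrangian immersions $L\to\R^{2n}$ are in bijection with homotopy classes of Lagrangian Gauss maps, equivalently with vertical homotopy classes of lifts of the stable Gauss map to the Lagrangian Grassmannian bundle; since all such immersions are homotopic (as smooth immersions) when $n$ is large relative to $\dim L$ — here $\dim L = n$, so this is the borderline case and one uses that $L$ is closed of dimension $n$ immersing into $\R^{2n}$ — the invariant $d(f)$ taken modulo the choice within a smooth regular homotopy class becomes a well-defined function of the Lagrangian lift. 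The key point is that $d(f)$, the algebraic count of double points, is computed by the self-intersection number of $L$ inside $\R^{2n}$, which by the standard formula equals (up to sign) the Euler number of the normal bundle, and for a Lagrangian immersion the normal bundle is isomorphic to $T^*L\cong TL$. So $d(f)$ should be expressible through $e(TL)=\chi(L)$ together with a correction term coming from the difference between the Lagrangian regular homotopy class of $f$ and a ``reference'' one; when $n$ is odd $\chi(L)=0$ and the entire contribution is this correction term.

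Next I would make the correction term explicit. The difference of two Lagrangian lifts over $L$ is classified by a map $L\to U(n)/O(n)$, equivalently (after stabilization and using $\pi_*(U/O)$) by a cohomology class; the double-point count changes under such a modification by the evaluation of a universal class against the fundamental class $[L]$. The surgery/bordism computation of this universal class is exactly what pins down the three cases in the statement: one needs the relevant Kervaire-type invariant or the mod-$2$ reduction of a Pontryagin/Wu class to be computable, and this is where the hypotheses (stably parallelizable; $n=4k+1$ and orientable; $n=8k+3$, $k\neq 2^q$, spin) enter — they guarantee that the obstruction-theoretic contributions one cannot otherwise control either vanish or collapse to $\chi_{\frac12}(L)\bmod 2$. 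Concretely, for stably parallelizable $L$ the reference immersion can be taken with $d=0$ and the correction term is read off from $\pi_n(U/O)$, which for these dimensions is detected by the semi-characteristic via the work on vector fields and the Kervaire semi-characteristic (Atiyah, Lusztig--Milnor--Peterson); the orientable $n=4k+1$ and spin $n=8k+3$ cases extend this by a cobordism argument reducing a general $L$ to the stably parallelizable case, using that the relevant characteristic numbers are cobordism invariants in precisely those dimensions.

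I expect the main obstacle to be the explicit identification of the universal double-point class with Kervaire's semi-characteristic — i.e., showing that the homotopy-theoretic correction term is $\chi_{\frac12}(L)\bmod 2$ rather than some other mod-$2$ invariant. This requires the input that $\chi_{\frac12}$ is the obstruction to the existence of two linearly independent vector fields (or rather is governed by $KO$-theoretic data in dimensions $\equiv 1\bmod 4$), together with the dimension restrictions that make the comparison of invariants in $\Z/2$ unambiguous; the excluded dimensions $1$ and $3$, and the exclusion $k\neq 2^q$ in case (iii), are exactly the places where the available vector-field / characteristic-class technology fails to force this identification, which is why the theorem is stated conditionally. The remaining steps — setting up the $h$-principle, writing the double-point count as a normal Euler number, and running the cobordism reduction — are by comparison routine once this identification is in hand.
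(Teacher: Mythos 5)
The paper does not actually prove this statement: it is quoted as Audin's theorem and cited to \cite{Audin}, so there is no in-paper argument to compare against; the benchmark is Audin's original proof. Your outline does point at the right toolbox --- the Gromov--Lees $h$-principle reducing Lagrangian regular homotopy classes to homotopy classes of Lagrangian Gauss maps into $U(n)/O(n)$, the isomorphism of the normal bundle of a Lagrangian immersion with $T^*L\cong TL$, and the semi-characteristic/vector-field results (Atiyah, Lusztig--Milnor--Peterson) which are indeed the source of the hypotheses (i)--(iii) --- so the strategy is aimed in the same general direction as Audin's.

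However, as a proof there is a genuine gap, and you flag it yourself: the step you defer (``the main obstacle'') is the entire content of the theorem. Two concrete problems. First, your mechanism for computing $d(f)$ --- self-intersection equals the normal Euler number, which for a Lagrangian is $\pm\chi(L)$ --- is vacuous precisely in the case at hand: for odd-dimensional $L$ the Euler class vanishes and $d(f)\in\Z/2$ is a finer invariant that the Euler class cannot see, so one needs an actual formula for the mod-$2$ double-point invariant of an immersion in double dimension (via the stable Hopf invariant or a Haefliger--Hirsch-type obstruction class), which you never supply. Second, ``the correction term is read off from $\pi_n(U/O)$ and is detected by the semi-characteristic'' is an assertion, not an argument: identifying that homotopy-theoretic quantity with $\chi_{\frac12}(L)$, and verifying that the hypotheses (stably parallelizable; $n=4k+1$ orientable; $n=8k+3$, $k\neq 2^q$, spin) are exactly what makes the relevant characteristic-number and cobordism arguments close up, is the heart of Audin's paper and is absent here. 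So what you have is a plausible road map, with the decisive computation still missing; completing it would essentially amount to reconstructing \cite{Audin}.
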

Let us call an $n$-dimensional, $n>2$, connected closed manifold $L$  with trivial complexified tangent bundle $TL\otimes\C$ {\em admissible} if at least one of the following conditions holds:
\begin{itemize}
\item $n=3$;
\item  $n$ is even, $L$ is orientable and $\chi(L)=2$;  .
\item  $n$ is even, $L$ is not orientable and $\chi(L)$ is even;
 \item $n$ is odd, $L$ satisfies one of the conditions (i)--(iii) of Audin's theorem and $\chi_{\frac12}(L)=1$. 
 \end{itemize}
\begin{theorem}\label{thm:closed-exotic}
Let $L$ be a closed  admissible $n$-dimensional manifold.
Then there exists a Weinstein structure $W(L)= (\om_L,X_L,\phi_L)$ on $T^*S^n$ in the same formal  homotopy  class as the standard one, which contains $L$ as a
flexible Lagrangian submanifold in the homology class of the $0$-section (with $\Z/2$-coefficients in the non-orientable case).
Moreover, infinitely many of the $W(L)$ are distinct as Weinstein manifolds.
\end{theorem}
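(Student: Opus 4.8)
The plan is to realize $L$ as a flexible Lagrangian in $B^{2n}$ first, using the admissibility hypothesis to control the formal data, and then to close off the Legendrian boundary by a single Weinstein handle attachment. Concretely, I would start from Corollary \ref{cor-3-manifold}: since $L$ is admissible it has trivial $TL\otimes\C$, so removing a small ball gives a manifold $L^\circ:=L\setminus \Int B^n$ with nonempty boundary and an induced trivialization, and Corollary \ref{cor-3-manifold} produces a flexible Lagrangian embedding $(L^\circ,\p L^\circ)\hookrightarrow (B^{2n},\p B^{2n})$ with Legendrian boundary $\Lambda:=\p L^\circ\subset (S^{2n-1},\xi_{\std})$, which is a formal Legendrian $(n-1)$-sphere. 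The point of admissibility is exactly that Audin's theorem (Theorem \ref{thm:Audin}) together with the Euler-characteristic/semi-characteristic conditions guarantees that the relevant Lagrangian immersion of $L^\circ$ has vanishing algebraic double point count, hence can be regularly homotoped to an embedding — this is what lets us apply Theorem \ref{thm:main}.

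Next I would attach a single Weinstein $n$-handle to $B^{2n}$ along $\Lambda$ to obtain a Weinstein domain $W'$; since $\Lambda$ bounds the flexible Lagrangian $L^\circ$, the cocore disc $C$ of this handle together with $L^\circ$ glue to a closed Lagrangian $L\subset W'$, and by Proposition \ref{prop:flex}(i) (or its proof via Lemma \ref{lm:equiv-charact-flexibility}) the resulting closed Lagrangian is flexible provided we attach along a loose representative of the formal Legendrian class of $\Lambda$; looseness of some representative in the given formal class is available because $\Lambda$ lives in the standard sphere and $n-1\ge 2$, so Murphy's $h$-principle applies, and flexibility of $L^\circ$ in $B^{2n}$ already provides a loose chart for $\Lambda$ disjoint from $L^\circ$. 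The smooth manifold underlying $W'$ is $B^{2n}$ with an $n$-handle, i.e. a disc bundle over $S^n$; one checks via the framing data (encoded by the trivialization $\eta$ and the fact that $[L]$ should be the zero-section class) that this disc bundle is diffeomorphic to $T^*S^n$, and that the almost symplectic class agrees with the standard one — this is a Chern-class / clutching-function computation using that $TL\otimes\C$ is trivial and $n>2$. Set $W(L):=W'$ with this Weinstein structure; by construction $L\subset W(L)$ is flexible and represents the zero-section homology class (with $\Z/2$ coefficients when $L$ is non-orientable), giving the first assertion.

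For the last sentence — infinitely many of the $W(L)$ being pairwise non-symplectomorphic — I would not use $L$ but rather vary the attaching data. The standard mechanism: flexible Weinstein domains with the same underlying smooth manifold and formal class are classified up to Weinstein homotopy by their formal data (by the flexible Weinstein $h$-principle, Theorem 13.1/14.3 of \cite{CE12}), but different Weinstein structures on $T^*S^n$ are distinguished by holomorphic invariants of their \emph{filled} Legendrians, or more robustly here by invariants of $L$ itself. Concretely, as in Remark \ref{remark:boundaryofflexible}, since $W(L)$ is flexible $SH_*(W(L))=0$, so $WFH_*(L,L;W(L))=0$, and then by \cite{E12} the linearized Legendrian contact homology of $\p L^\circ$ computed in $W(L)$ is isomorphic to $H_*(L^\circ,\p L^\circ)\cong \wt H_{*-1}(L)$ (reduced homology, shifted). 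Thus the Legendrian $\Lambda$ together with its augmentation coming from the filling $L^\circ$ has linearized contact homology equal to the (reduced, shifted) homology of $L$; by choosing an infinite family of admissible $L$ with pairwise non-isomorphic $\wt H_*(L)$ — e.g. connected sums or products producing arbitrarily large total Betti number while staying admissible (for instance in the $n=3$ case, $\#^k(S^1\times S^2)$ are all admissible with unbounded $H_*$) — we get infinitely many $W(L)$ that cannot be symplectomorphic by fixing base points and running the same invariant inside each $W(L)$.

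The main obstacle I expect is the bookkeeping in the second paragraph: producing a \emph{loose} representative of the formal Legendrian $\Lambda$ that still bounds the flexible $L^\circ$ after a formal-to-genuine adjustment, and simultaneously identifying the smooth type as $T^*S^n$ with the \emph{standard} almost-symplectic structure while getting $[L]$ to be the zero-section class. These framing computations, and the compatibility of the loose chart with the Lagrangian, are where the argument is genuinely delicate; the invariant-theoretic step in the third paragraph is comparatively formal once one cites \cite{E12} and the vanishing of $SH_*$ for flexible domains. Finally one should double-check that the family of admissible $L$ can indeed be chosen infinite with unbounded homology — this is a purely topological verification using the admissibility conditions and Audin's theorem — but it presents no conceptual difficulty.
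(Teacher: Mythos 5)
Your first step (remove a ball, apply Corollary \ref{cor-3-manifold}, attach a single Weinstein $n$-handle) is the paper's route, but two of your key claims are wrong in ways that break the argument. First, you misplace the role of admissibility. Corollary \ref{cor-3-manifold} needs only $\partial \wh L\neq\varnothing$ and triviality of $TL\otimes\C$; no double-point count enters (double points are removed using the boundary), so Audin's theorem is not what produces the embedding of $\wh L$. Admissibility is used afterwards, in Lemma \ref{lm:formal}: it is exactly what forces the Legendrian boundary $\partial\wh L\subset S^{2n-1}$ to lie in the \emph{formal} class of the Legendrian unknot (rotation class zero; $\tb=\pm\chi(\wh L)$ for $n$ even, $\tb=\chi_{\frac12}$ via Audin's Theorem \ref{thm:Audin} for $n$ odd, with Lemma \ref{lm:non-orient-change} needed to adjust $\tb$ in the non-orientable case). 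Your assertion that the resulting domain is formally standard $T^*S^n$ ``using that $TL\otimes\C$ is trivial and $n>2$'' is false: by Remark \ref{rm:necessity} the conditions $\chi(L)=2$, resp.\ $\chi_{\frac12}(L)=1$, are necessary, and they enter precisely through the framing/$\tb$ data of the attaching sphere. Second, the handle must be attached along $\partial\wh L$ itself (whose \emph{core}, not cocore, closes $\wh L$ up to $L$); attaching along some other ``loose representative'' of its formal class destroys the gluing, and $\partial\wh L$ is never loose since it bounds an exact Lagrangian in $B^{2n}$ (Remark \ref{remark:boundaryofflexible}), so flexibility of $\wh L$ certainly does not provide a loose chart for it. No looseness is needed: $T^*\wh L$ together with the handle is a Weinstein neighborhood of $L$, and its complement in $W(L)$ is the flexible cobordism complementary to $\wh L$ in $B^{2n}$, which is exactly the definition of flexibility of the closed $L$; Proposition \ref{prop:flex}(i) is not the relevant statement here.

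The distinguishing argument also collapses. $W(L)$ is \emph{not} a flexible Weinstein domain and $SH_*(W(L))\neq 0$, precisely because it contains the closed exact Lagrangian $L$ (Remark \ref{rem:mincomplexity}); Remark \ref{remark:boundaryofflexible} concerns $\wh L\subset B^{2n}$, not $L\subset W(L)$. So ``$SH_*(W(L))=0$, hence $WFH_*(L,L;W(L))=0$'' is false (for closed $L$ this wrapped group is just $H_*(L)\neq0$), and the linearized contact homology of $\partial\wh L$ with its filling augmentation is an invariant of a Legendrian in $S^{2n-1}$, not of the symplectomorphism type of $W(L)$: a symplectomorphism $W(L)\to W(L')$ carries no preferred such Legendrian, so ``running the same invariant inside each $W(L)$'' is not well defined. (Note also the internal inconsistency: if $W(L)$ were flexible it could contain no closed exact Lagrangian, contradicting the first half of the theorem.) The paper instead computes $SH(W(L))$ itself: the Viterbo transfer map to the Weinstein neighborhood $UT^*L$ of the exact Lagrangian $L$ is an isomorphism compatible with the BV structure, and $SH(UT^*L)\cong H_*(\Lambda L)$ with the string topology BV structure for spin $L$; then it suffices to choose admissible $L$ with $H^1(L;\Z)=0$ and finite fundamental groups $\Z/k$ (lens spaces $L(k,1)$ for $n=3$, boundaries of regular neighborhoods in $\R^{n+1}$ of $2$-complexes with $\pi_1=\Z/k$ for $n\geq4$), since the rank of $H_0(\Lambda L)$ counts conjugacy classes of $\pi_1(L)$. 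Your family $\#^k(S^1\times S^2)$ is admissible, but you have supplied no valid symplectic invariant separating the corresponding $W(L)$.
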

\begin{remark}\label{rm:necessity}
Conversely, any closed regular Lagrangian in $T^*S^n$ with a possibly exotic, but formally standard Weinstein
structure must have a trivial complexified tangent bundle  and   realizes the generator homology class (with  $\Z/2$-coefficients if $L$ is not orientable), see Lemma \ref{lm:nec-reg}.    Furthermore, if $n$ is even then $\chi(L)=2$ if $L$ is orientable and $\chi(L)$ is even otherwise. Indeed, 
we have $\chi(L)=-[L]\cdot [L]=-[S^n]\cdot [S^n]=\chi(S^n)=2,$ and if $L$ is not orientable this holds $\mod\ 2$.   If $n$ is odd then  one can deduce from Audin's theorem that for all admissible $L$, i.e.  in all cases listed in that theorem   the condition $\chi_{\frac12}(L)=1$ is also necessary.
\end{remark} 
The proof of Theorem \ref{thm:closed-exotic} roughly will proceed as follows: we remove
a disc from $L$ to obtain a manifold $\wh L$ with spherical boundary. Corollary
\ref{cor-3-manifold} produces a flexible Lagrangian embedding $\wh L \hookrightarrow
\C^n$ with parametrized Legendrian boundary $S^{n-1} \cong \p \wh L
\hookrightarrow S^{2n-1}$; if this Legendrian lies in the same formal Legendrian isotopy 
class as the standard unknot, then the result $W(L)$ of attaching a handle to
$\C^n$ along $\p \wh L$  (which contains $L$ as a regular Lagrangian) will be
formally homotopic to $T^* S^n$. It will thus be necessary to understand the
Legendrian isotopy class of the aforementioned Legendrian embedding.

Recall that the formal Legendrian isotopy class of a parameterized Legendrian sphere $g:S^{n-1}\to S^{2n-1}$ in the standard contact $S^{2n-1}$ is determined by two invariants (see   \cite{M11,CE12,Tab88,Go11}): the {\em rotation class} $r(g)\in\pi_{n-1}(U(n))$ and the {\em generalized Thurston-Bennequin invariant} ${\tb}(g)$. If  $n$ is even then $\tb(g)$ can be defined as the linking number between $g$ and its push-off by the Reeb flow. If $n$ is odd the rotation class identically vanishes, while the above    definition of $\tb(S)$ always yields $\pm\frac{\chi(S^{n-1})}2=\pm1$, where the sign depends only on dimension. When $n=3$  there is indeed only 1 formal Legendrian isotopy class of spheres. However, for all odd $n>3$ there are exactly two classes, see \cite{M11, CE12}. They are distinguished by a modified Thurston-Bennequin invariant, which we will continue to denote by $\tb$, and which can be defined as follows, see \cite{CE12}.

The vanishing of $r(g)$ allows us to connect $g$ with the Legendrian unknot $g_0$ by a regular Legendrian homotopy. Viewing the homotopy as an immersed cylinder in $S^{2n-1}\times [0,1]$, and assuming that the immersion has transverse double points, we set  $\tb(g):=k+1(\mod\, 2)$, where $k$ is the number of double points. It turns out that this residue is independent of the choice of a regular homotopy. 


In order to prove Theorem \ref{thm:closed-exotic} we will need the following two Lemmas:
 \begin{lemma}\label{lm:non-orient-change}
 Let $\wh L$ be a non-orientable manifold of dimension $n=2k>2$ bounded by a sphere,  and $h:S^{n-1}\to\p \wh L$ a parameterization of its boundary.
 Suppose that the complexified tangent bundle $TL\otimes\C$ is trivial. Then for any $k\equiv\chi(L) \ (\mod\ 2)$ there exists a Lagrangian embedding
 $f:(\wh L,\p\wh L)\to (B^{2n},
 \p B^{2n})$ with Legendrian boundary such that $\tb(f\circ h)=k$ and $r(\wh f\circ h)=r(f\circ h)$. 
 \end{lemma}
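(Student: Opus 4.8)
The plan is to start from a Lagrangian immersion of $\wh L$ produced by Gromov's h-principle, convert it to a Lagrangian embedding with Legendrian boundary as in Corollary \ref{cor-3-manifold}, and then control the Thurston--Bennequin invariant of the boundary Legendrian by performing a controlled number of local modifications (finger moves / Whitney-type moves) that change the count of double points of a Lagrangian regular homotopy of the boundary sphere by $\pm 1$ each, while leaving the rotation class fixed. First I would recall that since $TL\otimes\C$ is trivial, a choice of trivialization $\eta$ gives a map $\phi:\wh L\to U(n)$ with $\phi(\p\wh L)\subset U(n-1)$; Gromov's h-principle for Lagrangian immersions then yields a Lagrangian immersion $f_0:(\wh L,\p\wh L)\to(B^{2n},\p B^{2n})$ transverse to $\p B^{2n}$ realizing this formal datum, with Legendrian boundary. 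As in the proof of Corollary \ref{cor-3-manifold}, Whitney's trick (available because $\p\wh L\neq\varnothing$, so double points can be pushed to and cancelled through the boundary in pairs) makes $f_0$ into an embedding $f$ away from a collar, inheriting a formal Lagrangian structure, and we may arrange the boundary parameterization to be $h$.

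Next I would compute $\tb(f\circ h)$ for this initial embedding. By the definition of $\tb$ recalled above (for even $n$, $\tb$ of a parameterized Legendrian sphere equals one plus the algebraic/parity count of double points of any Lagrangian regular homotopy in $S^{2n-1}\times[0,1]$ connecting it to the standard unknot $g_0$; the rotation class vanishes automatically is false here since $n$ even — rather for even $n$ one uses the linking-number definition, but the relevant statement is that $\tb\pmod 2$ is the obstruction we must hit). The key computation is that this $\tb$ is congruent mod $2$ to $\chi(\wh L)$, equivalently to $\chi(L)$ mod $2$: this follows from an Audin-style double-point count, since capping off $\wh L$ to the closed non-orientable $L$ and counting self-intersections of the Lagrangian immersion relates the parity of double points to $\chi(L)\pmod 2$ (the semi-characteristic / Euler characteristic obstruction of \cite{Audin}), and the boundary $\tb$ records exactly the parity of double points that get pushed off across $\p\wh L$. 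So the initial $f$ already has $\tb(f\circ h)\equiv\chi(L)\equiv k\pmod 2$; the content of the lemma is then that we can realize \emph{every} such $k$, i.e. change $\tb$ by $2$ without affecting the rotation class.

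To change $\tb$ by $2$, I would introduce a pair of additional double points into a Lagrangian regular homotopy of the boundary Legendrian: concretely, perform a local Lagrangian finger move (a genuinely Lagrangian, not merely smooth, isotopy supported in a Darboux ball meeting $\p B^{2n}$) on a collar of $\p\wh L$, which is then resolved back to an embedding at the cost of shifting the front/Reeb push-off linking number by a controlled amount. Equivalently, one connect-sums the boundary Legendrian with a standard "stabilization"-type local Legendrian sphere in a small ball that changes $\tb$ by $2$ and is formally trivial, hence leaves $r$ unchanged; because $\wh L$ has a collar $\partial\wh L\times[0,1)$ on which the Lagrangian can be taken cylindrical, this local modification extends to a Lagrangian embedding of all of $\wh L$ with the same interior behavior. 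Iterating $\pm$ versions of this move reaches any target value of $\tb$ with the prescribed parity. Finally I would note $r(\wh f\circ h)=r(f\circ h)$ automatically: the rotation class of the boundary is read off from $\Phi_1|_{T\wh L|_{\p\wh L}}$, which the local moves do not alter, and anyway for the relevant parities $r$ is determined by $\eta$.

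The main obstacle I expect is making the $\tb$-changing modification \emph{Lagrangian} rather than just smooth — a smooth finger move or stabilization is easy, but one must check that the local model can be realized by an honest Lagrangian isotopy (or Lagrangian immersed cobordism) inside a Darboux ball straddling $\partial B^{2n}$ and that it glues to the cylindrical collar of $\wh L$ without destroying the formal Lagrangian structure or the embedding property in the interior. This is exactly the kind of local Lagrangian surgery computation that underlies Audin's theorem and its converse; I would expect to build the local model explicitly in $T^*\R^n$ coordinates (a Lagrangian handle / Whitney sphere piece near the boundary) and verify by direct inspection that it shifts the Reeb-pushoff linking number of the boundary sphere by $2$ while remaining cylindrical outside a compact set.
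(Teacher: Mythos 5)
Your setup (producing the initial embedding via Corollary \ref{cor-3-manifold} and observing that its boundary has $\tb\equiv\chi(\wh L)\pmod 2$) matches the paper, but the core step --- changing $\tb$ by $2$ --- is where your proposal has a genuine gap. You propose to shift $\tb$ by an honest, local, \emph{embedded} Lagrangian modification (a ``Lagrangian finger move'' or a connect sum with a ``formally trivial'' local Legendrian sphere). This cannot work as stated: in even dimensions $\tb$ is a formal Legendrian invariant, so any sphere whose connect sum changes $\tb$ by $2$ is \emph{not} formally trivial, and any Legendrian regular homotopy realizing such a change has an immersed Lagrangian trace with double points --- there is no embedded Lagrangian cylinder between the two boundary Legendrians. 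Worse, your proposed move makes no use of non-orientability, so if it existed it would apply equally to orientable fillings and contradict the rigidity $\tb=\pm\chi(\wh L)$ for orientable Lagrangian fillings (exactly the Reeb push-off computation in Lemma \ref{lm:formal}). You correctly flagged ``making the modification Lagrangian'' as the main obstacle, but the obstacle is not technical --- it is an obstruction.

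The paper's resolution is to \emph{not} make the modification Lagrangian: stabilize the boundary Legendrian by a Legendrian regular homotopy changing $\tb$ by $m=2l$, glue the resulting immersed Lagrangian cylinder (with $2l$ double points) to the original embedding to get a Lagrangian \emph{immersion} of $\wh L$ with the desired boundary invariants, then use non-orientability of $\wh L$ to cancel the $2l$ double points in pairs by a merely smooth isotopy (signs of double points can be reversed along orientation-reversing loops), obtaining a \emph{formal} Lagrangian embedding in the new formal class. Finally, Corollary \ref{cor-3-manifold} (i.e.\ the flexibility $h$-principle of Theorem \ref{thm:main}) is applied a second time to upgrade this formal embedding to a genuine Lagrangian embedding with the prescribed $\tb$ and unchanged rotation class. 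So the fix for your argument is to replace the local embedded-Lagrangian move by the immersed stabilization cylinder plus smooth Whitney cancellation (this is where non-orientability enters) plus a second application of the $h$-principle.
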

 \begin{proof}

 Let $\wh f:(\wh L,\p\wh L)\to (B^{2n},
 \p B^{2n})$  be a Lagrangian embedding with Legendrian boundary provided by Corollary \ref{cor-3-manifold}. Using the stabilization procedure, see \cite{CE12},
 one can modify for any integer $m$ the Legendrian knot $\wh f|_{\p\wh L}$ by a Legendrian regular homotopy to  a Legendrian embedding $f:\p\wh L\to \p B^{2n}$ with   $\tb(f)=\tb(\wh f|_{\p\wh L})+m$. Note that $r(\wh f)=r(f)$.
  Let   $F:\p\wh L\times[0,1]\to \p B^{2n}\times [0,1]$   be a Lagrangian immersion, corresponding to this regular  homotopy which connects  $F|_{\p\wh L\times 0}=\wh f|_{\p\wh L}$ and
 $F|_{\p\wh L\times 1}=f$. The algebraic number of double points of  $F$   is equal to $m$.  Gluing the Lagrangian cylinder $F$ with the embedding $\wh f$ we get a Lagrangian immersion $\wt f$  of $\wh L$ whose boundary Legendrian sphere has  its Thurston-Bennequin invariant  equal to $\tb(\wh f|_{\p\wh L})+m$ and has the same rotation class as $f$.
 
   Suppose  now that $k$ has the same parity as $\chi(\wh L)$. Since $\tb(\wh f|_{\p\wh L})$ also has the same parity as $\chi(\wh L)$, we have $k=\tb(\wh f|_{\p\wh L})+2l$ for some integer $l$. Apply the above construction to $m:= 2l$. Then, using non-orientability of $\wh L$ one  can  cancel all $2l$ double points in pairs by a smooth (not necessarily Lagrangian) isotopy, thus obtaining a {\em formal} Lagrangian embedding with the required  Legendrian boundary invariants.   Applying again  Corollary \ref{cor-3-manifold}  we  construct  a genuine Lagrangian embedding $f:\wh L\to B^{2n}$ with the prescribed invariants of the boundary.   
  
 \end{proof}

\begin{lemma}\label{lm:formal} 
Suppose that $L$ is admissible, and $\wh L$ is obtained from $L$ by removing an $n$-ball, $\wh L:=L\setminus\Int D^n$. Suppose that the boundary  $\p \wh L$ is parameterized by a diffeomorphism  $h:S^{n-1}\to\p\wh L$. If $L$ is orientable then for any Lagrangian embedding   with Legendrian boundary $f:(\wh L,\p \wh L)\to (B^{2n},S^{2n-1})$, the Legendrian embedding   $f\circ h: S^{n-1} \to S^{2n-1}$  is  in the   {\em formal} Legendrian isotopy class of the   Legendrian unknot $g_0:S^{n-1}\to S^{2n-1}$. If $L$ is non-orientable then there exists a  Lagrangian embedding $f:(\wh L,\p \wh L)\to (B^{2n},S^{2n-1})$  with Legendrian boundary such that $f\circ h$ is in the {\em formal} Legendrian isotopy class of the   Legendrian unknot $g_0$.
  \end{lemma}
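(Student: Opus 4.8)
The plan is to compare the two invariants classifying the formal Legendrian isotopy class of the parametrized $(n-1)$-sphere $\Lambda:=f\circ h\subset S^{2n-1}$ --- its rotation class $r(\Lambda)\in\pi_{n-1}(U(n))$ and its (generalized, resp.\ modified) Thurston--Bennequin invariant $\tb(\Lambda)$ --- with those of the standard Legendrian unknot $g_0$. When $n=3$ there is only one formal Legendrian isotopy class of $(n-1)$-spheres, so any Lagrangian embedding works and there is nothing to prove; assume $n>3$ from now on. For the rotation class: when $n$ is odd it vanishes identically, so $r(\Lambda)=r(g_0)$; when $n$ is even, $\wh L\subset B^{2n}\subset\C^n$ is a genuine exact Lagrangian (it has Legendrian boundary and is tangent to the Liouville field there), so a standard argument exploiting the rigidity of exact Lagrangian fillings in $\C^n$ --- via the vanishing of the Maslov class of such fillings, cf.\ \cite{CE12} --- forces $r(\p\wh L)=0=r(g_0)$ as well.

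For the Thurston--Bennequin invariant with $n$ even I would invoke the standard identity $\tb(\Lambda)=\pm\chi(\wh L)$ valid for the boundary of a genuine Lagrangian filling (the Reeb push-off of $\Lambda$ meets $\wh L$ in a set computing $\pm\chi(\wh L)$, via a generic function on $\wh L$ vanishing on $\p\wh L$). Since $n$ is even, $\chi(\wh L)=\chi(L)-1$. If $L$ is orientable and admissible then $\chi(L)=2$, so $\chi(\wh L)=1=\chi(D^n)$ and $\tb(\Lambda)=\tb(g_0)$ for \emph{every} Lagrangian embedding $f$, which together with the rotation class settles the orientable case. If $L$ is non-orientable and admissible then $\chi(L)$ is even, so $\chi(\wh L)$ is odd, and Lemma~\ref{lm:non-orient-change} allows me to choose a Lagrangian embedding $f$ with $\tb(f\circ h)$ equal to the value $\pm1$ of $\tb(g_0)$ --- which is legitimate because $\pm1$ has the parity of $\chi(\wh L)$ --- settling the non-orientable case for $n$ even.

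For the Thurston--Bennequin invariant with $n$ odd, $n>3$ (here $L$ is automatically orientable, each hypothesis of Audin's theorem forcing orientability), $\tb(\Lambda)\in\Z/2$ is computed as $k+1$ from the number $k$ of double points of any regular Legendrian homotopy from $\Lambda$ to $g_0$, one of which exists because the rotation classes agree. Realizing this homotopy as an immersed Lagrangian cylinder with $k$ double points, I cap one end with the embedded filling $f(\wh L)$ and the other with the standard Lagrangian disc filling $g_0$; straightening collars yields a Lagrangian immersion of the closed manifold $L=\wh L\cup_{S^{n-1}}D^n$ into $\C^n$ whose mod-$2$ algebraic double-point count is $k+c$, where $c$ is the combined contribution of the two caps. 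The point is that $c$ is universal, independent of $\Lambda$: the filling $f(\wh L)$ is embedded, and the disc-cap contribution depends only on $\chi(\wh L)=\chi(L)-1+\chi(S^{n-1})=1$, which is the same for every closed odd-dimensional $L$. Since $L$ is admissible it meets one of the hypotheses of Audin's theorem, which gives that this count equals $\chi_{\frac12}(L)$; admissibility also gives $\chi_{\frac12}(L)=1$, and running the identical construction with $\Lambda=g_0$, $L=S^n$ (trivial homotopy, $k=0$) pins down $c=1$. Hence $k$ is even and $\tb(\Lambda)=k+1\equiv1=\tb(g_0)$. Combining all the cases, $\Lambda$ has the rotation class and Thurston--Bennequin invariant of $g_0$, hence is formally Legendrian isotopic to it --- for every $f$ when $L$ is orientable and for a suitable $f$ when $L$ is non-orientable.

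I expect the main obstacle to be this last step: organizing the double points of the capped-off immersion so that the part carrying $\tb$ (the regular Legendrian homotopy) is cleanly separated from the universal contribution of the caps, and applying Audin's theorem with exactly the right hypotheses --- this is precisely where the admissibility conditions and the exclusion $n\neq3$ are used essentially. The rotation-class computation for $n$ even, which rests on the rigidity of exact Lagrangian fillings in $\C^n$, is a secondary technical point, as is the verification that the various caps and straightened collars can be placed so that no unaccounted intersections are created.
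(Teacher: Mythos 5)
Your overall route is the paper's: compare the two formal invariants $r$ and $\tb$ of $f\circ h$ with those of the unknot $g_0$, dispose of $n=3$ by uniqueness of the formal class, prove $\tb=\pm\chi(\wh L)$ for $n$ even via the Reeb push-off of a boundary-adapted vector field, use Lemma \ref{lm:non-orient-change} to adjust $\tb$ in the non-orientable case, and for odd $n>3$ glue the embedded filling to the immersed Lagrangian cylinder of a regular Legendrian homotopy to $g_0$ and to an immersed Lagrangian cap of $g_0$, then apply Audin's Theorem \ref{thm:Audin} together with $\chi_{\frac12}(L)=1$. Your way of pinning the cap contribution $c$ by running the construction on $L=S^n$ (flat Lagrangian disc filling of $g_0$, trivial homotopy) is a mild variant: the paper instead cites directly that $g_0$ bounds an immersed Lagrangian disc with exactly one double point; your calibration is legitimate since $S^n$ is stably parallelizable with $\chi_{\frac12}(S^n)=1$, though your aside that the disc-cap count ``depends on $\chi(\wh L)$'' is off --- it is simply the double-point count of a fixed cap of $g_0$, which your calibration determines anyway.

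The one step that does not stand as written is the rotation class for $n$ even. You justify $r(f\circ h)=0$ by ``rigidity of exact Lagrangian fillings in $\C^n$ via the vanishing of the Maslov class, cf.\ \cite{CE12}.'' There is no such theorem in \cite{CE12}; the Maslov class of the filling is not the same datum as the rotation class of the boundary; and (for non-exact $\lambda|_{\wh L}$ issues aside) no rigidity is needed, because the statement is soft. The paper's argument: since $f$ is a Lagrangian embedding, the map $S^{n-1}\to U(n)$ representing $r(g)$ extends over $\wh L$, so $r(g)$ is in the image of the restriction from $\wh L$; for $n$ even $\pi_{n-1}(U(n))\cong\Z$ and the Hurewicz homomorphism $\pi_{n-1}(U(n))\to H_{n-1}(U(n))$ is injective \cite{Hu94}, so $r(g)$ is detected by the image of the fundamental class of $\p\wh L$, which dies because $[\p\wh L]=0$ in $H_{n-1}(\wh L;\Z)$ when $\wh L$ is orientable; in the non-orientable case one gets $2r(g)=0$ and hence $r(g)=0$. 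Note that you do need this non-orientable statement (non-orientable admissible $L$ occur only for $n$ even, and one must know that the embedding produced by Lemma \ref{lm:non-orient-change}, which preserves $r$, has the unknot's rotation class), and your write-up does not address it. With the Hurewicz argument substituted for the rigidity appeal, your proof coincides with the paper's.
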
 

\begin{proof}

 
 Now, suppose that $g=f\circ h$ for a Lagrangian embedding  $f:\wh L\to  B^{2n}$ and a diffeomorphism $h:S^{n-1}\to\p\wh L$. Then $r(g)=0$. 
We already noted that this is always the case when   $n$ is odd.  If $n$ is  even and $L$ is orientable then the Hurewicz 
 homomorphism $\pi_{n-1}(U(n)) \to H_{n-1}(U(n))$     is injective (see e.g.  Theorem 20.9.6 in \cite{Hu94}).  Hence $r(g)=  0$ if the bounding Lagrangian is orientable. But the same argument applies in the non-orientable case to $2r(g)\in \pi_{n-1}(U(n)) =\Z$. 
  
 If $n$ is even  and $L$ is orientable then 
   $\tb(g)=\pm \chi(\wh L)=\tb(g_0)$. This is proven in \cite{Go11} but here is another argument. Consider a vector field $v$ tangent to $\wh L$ such that $v|_{\p\wh L}$ agrees the Liouville vector field $X$.
    Then the push-off of $f(\wh L)$ along $w:=Jdf(v)$ intersects $f(\wh L)$  at $|\chi(\wh L)|$ points. But $w|_S$ is the Reeb vector field, so the linking number entering the definition of  $\tb(g)$ is equal to $\chi(\wh L)$ up to sign. If $\wh L$ is not orientable then the above argument implies only that $\tb(g)\equiv \chi(\wh L)\ (\mod\ 2)$. But in that case Lemma \ref{lm:non-orient-change} allows us to {\em modify} the embedding $f$ to ensure that 
    $\tb(g)= \chi(\wh L)$.  
Suppose now that $n$ is  odd and $n>3$. Then we can  use Audin's Theorem \ref{thm:Audin} to deduce that $\tb(g)$ coincides with the Kervaire semi-characteristic $\chi_{\frac12}(\wh L)$    for all admissible $L$.  
Indeed, let $G:S^{n-1}\times [0,1]\to B^{2n}(R)\setminus B^{2n}(1)$ be a Lagrangian immersion realizing a regular Legendrian homotopy connecting $G|_{S^{n-1}\times0}= g:S^{n-1}\to \p B^{2n}(1)$    with the Legendrian unknot $g_0=G|_{S^{n-1}\times1}:S^{n-1}\to \p B^{2n}(R)$.
 Such immersion exists  for sufficiently large $R$, see \cite{EG-approach}.
 It has $\tb(g)+1\ (\mod\ 2)$ intersection points.
 In turn, the unknot $g_0$ bounds an immersed Lagrangian disc  $g_1:(D^n,\p D^n)\to (\C^n\setminus \Int B^{2n}(R),\p B^{2n}(R))$ with 1 intersection point. Gluing together  the Lagrangian  embedding $f$ with  Lagrangian immersions
 $G$ and $g_1$ we get a Lagrangian immersion of  $L=\wh L\cup D^n$ to $\C^n$
 with  $\tb(g)\ (\mod\ 2)$ intersection points, and hence the claim follows from Audin's Theorem \ref{thm:Audin}.
   \end{proof}
 \begin{proof}[Proof of Theorem \ref{thm:closed-exotic}]
 Using Corollary \ref{cor-3-manifold} we realize $(\wh L:=L\setminus\Int D^n,\p\wh L)$ as a flexible Lagrangian submanifold with Legendrian boundary in the standard symplectic ball $(B^{2n},\p B^{2n}) $. According to Lemma \ref{lm:formal} the gluing diffeomorphism $h$ viewed as a Legendrian embedding $\p D^n\to\p B^{2n}$ is formally Legendrian isotopic to the standard Legendrian unknot.   Hence, by   attaching to the ball
$B^{2n}$ a Weinstein handle of index $n$ along $\p L$ using $h$  we get a  Weinstein domain  $W(L)$ diffeomorphic to the disk cotangent bundle  $UT^*S^n$ with its symplectic structure in  the standard formal symplectic homotopy class.  
The Weinstein domain $W(L )$ contains   $L$ as a closed flexible Lagrangian
submanifold in the homology class of the $0$-section. 

To prove the second part of the theorem, which concerns with infinitely
many symplectomorphism types of the  resulting Weinstein domains $W(L)$, we first
observe that the Viterbo transfer map on symplectic homology  $SH(W(L))\to
SH(UT^*(L))$ is an isomorphism preserving symplectic cohomology's TQFT
operations and BV algebra structure, see e.g. \cite{ritterTQFT} (the part about
compatibility of Viterbo transfer map with the BV operator is folkloric).  In turn,
$SH(UT^*(L))$ as a BV algebra is isomorphic to $H(\Lambda(L); \mathbb{Z})$,
the homology of the free loop space $\Lambda(L)$, with the Chas-Sullivan string
topology BV algebra structure at least whenever $L$ is Spin, see
\cite{viterbo2, salamonweber, abboschwa, abboschwa_prod,
abouzaidsh}.
Since $c_1(T^*S^n) = 0$ and $H^1(T^*S^n; \mathbb{Z}) =0$, there is a canonical grading on $SH(W(L))$; if $H^1(L; \mathbb{Z}) = 0$, then $SH(W(L)) \cong H(\Lambda (L); \mathbb{Z})$ is grading-preserving. 
 In particular, assuming $L, L' $ are spin with $H^1(L; \mathbb{Z}) = H^1(L' ; \mathbb{Z}) = 0$, the Weinstein domains $W(L)$ and $W(L)$ are not symplectomorphic whenever the BV algebras
$H_*(\Lambda(L); \mathbb{Z})$ and $H_*(\Lambda(L' ); \mathbb{Z})$ are non-isomorphic. 
Thus, the above construction provides a rich source of exotic symplectic
structures on $T^*S^n$; it is at least   as rich as the collection of
string topology BV algebra structure on various $n$-manifolds.  

Hence, to get infinitely many  non-symplectomorphic structures it 
suffices to find  for any $n \ge 3$ infinitely many closed stably parallelizable manifolds  $L$ with $H^1(L; \mathbb{Z}) = 0$, $\chi(L) = 2$ for $n$ even, $\chi_{\frac12}(L)=1$ for $n$ odd and $\neq 3$, and different $H_0(\Lambda (L); \mathbb{Z})$. Since the rank of $H_0(\Lambda(L); \mathbb{Z})$ equals the number of conjugacy classes of $\pi_1(L)$, it suffices to find $L$ with fundamental groups $\pi_1(L)$ with different numbers of conjugacy classes; if we furthermore assume that $\pi_1(L)$ is finite, then $H^1(L) = 0$ automatically since $H^1(L)$ is always torsion-free. For $n=3$ we can take the collection of Lens spaces $L(k,1)$. 
To get examples for $n \ge 4$,   consider the CW complex $X$ which has one  0, 1, and 2-cell such that the attaching map for the 2-cell wraps $k$ times around the 1-cell so that $\pi_1(X) \cong \mathbb{Z}/ k \mathbb{Z}$ and  $\chi(X) = 1$.  We can then embed $X$ into $\mathbb{R}^{n+1}$, $n \ge 4$, and take a regular neighborhood  $W$ of $X\subset \mathbb{R}^{n+1}$.  Then the closed  $n$-dimensional manifold $\p W$ satisfies all the required conditions. Indeed, it is stably parallelizable, $\chi(\p W)= 2\chi (W)=2$ for $n$ even,  $\chi_{\frac12}(\p W)=1$ for $n$ odd, and   $\pi_1(\partial W) \cong \pi_1(W) \cong \pi_1(X)=\mathbb{Z}/ k \mathbb{Z}$.
\end{proof}
\begin{remark}\label{rem:mincomplexity}
    There are by now many constructions of exotic Weinstein structures on $T^*
    S^n$, for instance \cite{maydanskiy, mclean, maydanskiyseidel,
    abouzaidseidel}.  A notable feature of the examples $W(L)$ given above is
    that they are each constructed from a {\em single handle attachment} on
    standard $B^{2n}$. Hence they have minimal complexity, meaning the
    defining function for the resulting Weinstein structure can be chosen with
    exactly two critical points (in particular, the Weinstein geometry of each
    $W(L)$ is entirely determined by the Legendrian isotopy class $\partial \wh L
    \subset S^{2n-1}$; recall $\wh L= L \backslash D^{n}$). In addition, each example $W(L)$ contains an
    exact Lagrangian $L$; and hence has non-vanishing symplectic homology with any
    coefficients (at least whenever $L$ is Spin).
 \end{remark}

   It is conceivable that the examples $W(L)$ are as diverse as
    diffeomorphism types of manifolds $L$:  
\begin{problem}\label{prob:completion}
Does symplectic topology of $W(L)$ remember the diffeomorphism type of $L$, or even of
$L$?
\end{problem}
 
As Remark \ref{rem:mincomplexity} recalls, the symplectic topology of
$W(L)$ is entirely determined by the differing Legendrian topology of
embeddings $\partial \wh L \hookrightarrow   \partial B^{2n}$; in particular the examples in
Theorem \ref{thm:closed-exotic} produce infinitely many non-isotopic
Legendrians $\partial \wh L \subset S^{2n-1}$ in the same formal isotopy class.
Hence, one can recast Problem
\ref{prob:completion} in terms of more general questions about the richness of
the Legendrian topology of boundaries $\partial L$ of flexible Lagrangians $L$. For
instance,
 \begin{problem}\label{prob:memory}
Does the Legendrian boundary $\partial L$ of a flexible Lagrangian $L$ remember the topology of the filling?
For instance, when $\p L=S^2$   there exists a unique {\em formal} class of Legendrian 2-spheres in contact $S^5$.
 Does the  genuine Legendrian isotopy class of $\p L$ remember the topology of  $L$?
\end{problem}
Work  in progress of T. Ekholm and Y. Lekili, \cite{EL},  see also  \cite{Laz-new},  implies that the Legendrian
boundary of a flexible Lagrangian knows a lot about the topology of its
filling. For instance, if $L$ is simply connected the  Legendrian isotopy class
of $\p L$ remembers the rational homotopy type of $L$.

Suppose we are given  two $n$-dimensional closed manifolds  manifolds $M,N$  such that
$M$ admits a formal Lagrangian embedding $N\to T M$ which intersects a cotangent fiber at 1 point.  Let  $ \wh M$ and $\wh N$ be manifolds with boundary obtained by removing   small $n$-discs from $M$ and $N$.  Then   the cotangent bundle $W:=T^*\wh M$  is a subcritical Weinstein manifold, and hence Theorem \ref{thm:main} provides a flexible Lagrangian embedding $f:(\wh N,\p \wh N)\to (W,\p W)$
 with Legendrian boundary.
 We also have the canonical inclusion
$(\wh M,\p\wh M)\hookrightarrow (W,\p W)$.
This leads to an alternative:

{\em either  Legendrian spheres $f(\p\wh N)$ and $j(\p\wh M)$ are not Legendrian isotopic, or the nearby Lagrangian conjecture fails}.

In particular, an intriguing case  is when $M,N$ are two homeomorphic  $4$-manifolds distinguished by gauge-theoretic invariants.

 

\section{Murphy-Siegel example}\label{sec:murphysiegel}

\begin{figure}[h]
\begin{center}
\includegraphics[width=50mm]{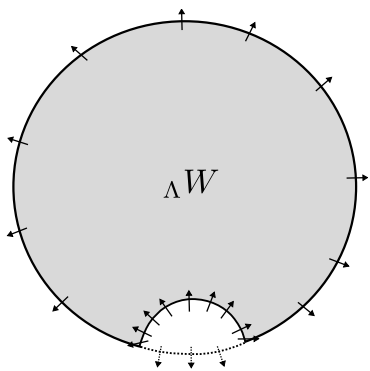}
\caption{Caving out a neighborhood of a Legendrian submanifold}\label{fig:caving-out}
\end{center}  
\end{figure}

\begin{figure}[h]
\begin{center}
\includegraphics[width=50mm]{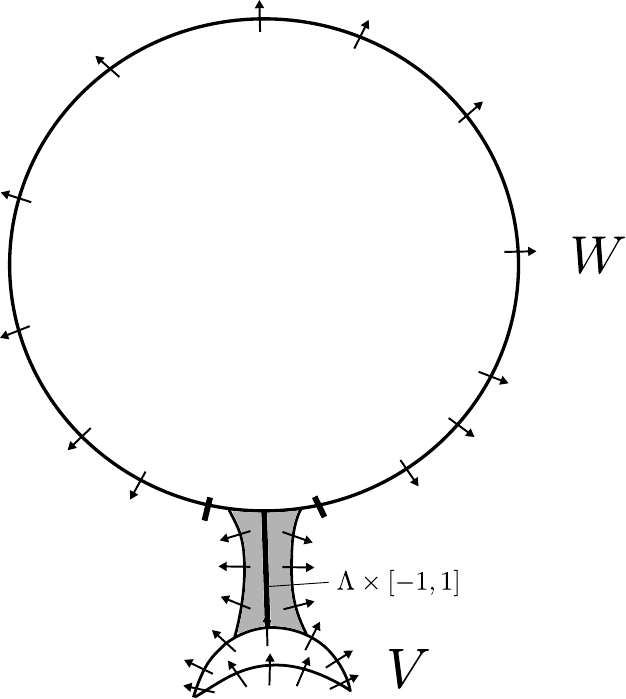}
\caption{Attaching a cylindrical handle}\label{fig:caving_surgery}
\end{center}  
\end{figure}

We note that  when $(D,\p D)\subset (W,\p W)$ is a regular Lagrangian disc, then besides the complementary
{\em cobordism } $W_D$ we can also consider a Weinstein {\em domain} $W^D$, 
the result of Weinstein {\em anti-surgery}. In other words, $W$  is obtained from $W^D$ by a Weinstein surgery with the co-core disc $D$. We denote by $\Gamma$ the Legendrian sphere in $W^D$ along which the handle with co-core $D$ is attached.

\begin{lemma}\label{lm:two-complements}
If $W_D$ is a flexible cobordism, then $W^D$ is a flexible domain. If $W^D$ is a flexible domain and $W$ is obtained from $W^D$ by attaching a handle along a loose Legendrian sphere $\p D\subset\p W^D$, then $W_D$ is flexible.
\end{lemma}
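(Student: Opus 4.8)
The plan is to translate both halves into the language of flexible Lagrangians. Recall from Definition \ref{def:flexible} that a regular Lagrangian disc $D\subset W$ is flexible exactly when $W$ admits a special tangent Weinstein structure whose complementary cobordism $W_D$ is flexible; so ``$W_D$ is a flexible cobordism'' and ``$D$ is a flexible Lagrangian'' amount to the same condition, and the two assertions of the lemma become: (a) if $D$ is flexible then the anti-surgery $W^D$ is a flexible domain; (b) if $W^D$ is flexible and the attaching sphere $\Gamma$ of the undone handle is loose, then $D$ is flexible.

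Assertion (b) I would deduce directly from Proposition \ref{prop:flex}(i). By construction of $W^D$ and $\Gamma$ we have $W=W^D\cup_\Gamma H$, where $H$ is an index-$n$ handle whose co-core disc is $D$; since $W^D$ is flexible and $\Gamma$ is loose, Proposition \ref{prop:flex}(i) says precisely that the co-core disc $D$ is a flexible Lagrangian, i.e. that $W_D$ is a flexible cobordism.

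For assertion (a) I would first observe that, because $W=T^*D\cup W_D$ with $T^*D\cong B^{2n}$ subcritical, stacking a flexible structure on $W_D$ on top of the subcritical handlebody $T^*D$ exhibits $W$ itself as a flexible Weinstein domain. It then remains to see that the anti-surgery $W^D$, obtained from $W$ by caving out a neighborhood of the handle $H$, equivalently of its co-core disc $D$ (Figures \ref{fig:caving-out}--\ref{fig:caving_surgery}), inherits flexibility. For this I would run the index-reduction of the proof of Lemma \ref{lm:equiv-charact-reg}, applied to the disc $D$, whose Morse index along $D$ equals $0$ while its index in $W$ equals $n$: in a special tangent structure for $D$, the handle $H$ is replaced by a coupled index-$0$ handle (absorbed into the subcritical piece $T^*D$), a subcritical handle connecting it to the rest, and an index-$n$ handle $H''$ attached along a stabilization of $\Gamma$. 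Thus a Weinstein handlebody for the cobordism $W_D$ differs from one for the domain $W^D$ only by the subcritical cap $T^*D$ (together with the index-$0$/index-$1$ pair, which form a canceling pair) and by the single critical handle $H''$. A handle cancellation, unlike a handle attachment, imposes no looseness condition, and the one critical handle involved, $H''$, is loose anyway, being attached along a stabilized Legendrian; hence in a flexible presentation of $W_D$ we may take $H''$ to be one of the loose $n$-handles, remove it and the canceling subcritical package, and obtain a flexible presentation of $W^D$.

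The step I expect to be the main obstacle is the structural identification used in the last paragraph: pinning down exactly how a Weinstein handlebody for the complementary cobordism $W_D$ and one for the anti-surgery $W^D$ differ, so that the package removed to pass from the former to the latter is genuinely harmless for flexibility (subcritical pieces together with a canceling pair and one loose critical handle), with no residual looseness hypothesis left on $\Gamma$. This is precisely the place where one must use that $D$ is the co-core of $H$ and that caving along $D$ undoes that handle, and it is also what accounts for the asymmetry between (a) and (b) — attaching back a critical handle, as in (b), requires its attaching Legendrian to be loose, whereas removing one, as in (a), does not.
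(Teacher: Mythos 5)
Your treatment of the second assertion is fine: writing $W=W^D\cup_\Gamma H$ and invoking Proposition \ref{prop:flex}(i) is exactly the alternative route the paper itself points out. The gap is in the first assertion. The hypothesis gives you an \emph{arbitrary} flexible Weinstein structure on the cobordism $W_D$, but your argument only relates $W^D$ to one \emph{specific} presentation of $W_D$, the one obtained by index-reducing the handle $H$; in that presentation the critical handles are $H''$ together with \emph{all} the critical handles of the given structure on $W^D$, and the latter are not known to be loose, so that presentation is not known to be flexible. Conversely, the flexible presentation of $W_D$ you are actually handed carries no identification of any of its handles with $H''$ or with a canceling subcritical package, and no reason that deleting such a package would leave a handlebody of $W^D$ --- this is precisely the ``structural identification'' you flag as the main obstacle, and it is never supplied. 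The auxiliary claim that $H''$ is attached along a stabilization of $\Gamma$, hence loose irrespective of $\Gamma$, is also unjustified: in the parallel index reductions in Lemmas \ref{lm:equiv-charact-reg} and \ref{lm:equiv-charact-flexibility} the canceling pair is the $(l,l+1)$-pair (here indices $0$ and $1$), the attaching sphere of the new index-$n$ handle is essentially $\Gamma$ again, and its looseness is an \emph{assumption} imported from an ambient flexible structure, not an output of the construction; were it automatic, it would in effect settle the second question of Problem \ref{prob:converse-to-flex}(i), which the paper leaves open. More globally, any strategy of the shape ``exhibit a flexible presentation containing a removable handle package and delete it'' collides with the subflexibility phenomenon: by Proposition \ref{prop:MS-example}, $\wh W=W\cup H$ can be flexible while $W=\wh W^{C}$ is not, so flexibility does not descend under removal of a critical handle, and the hypothesis on $W_D$ has to enter in a more essential way than it does in your sketch.

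The paper's proof goes in the opposite direction and avoids all of this. It deduces Lemma \ref{lm:two-complements} from Lemma \ref{lm:reg-Leg}, using the identification $W_D={}_{\Gamma}W^{D}$: the complementary cobordism of the co-core disc coincides with the result of caving out a neighborhood of the attaching sphere $\Gamma$ from $W^D$. For the first assertion one does not remove anything from $W_D$; one \emph{adds} a subcritical piece below it. The caved-out region is a copy of the subcritical domain $T^*(\Gamma\times[-1,1])$, and attaching the handles of \emph{whatever} flexible presentation of $W_D$ the hypothesis provides to the neighborhood $\Sigma$ of $\Gamma\times\{1\}$ in its boundary rebuilds $W^D$; a flexible cobordism stacked on a subcritical domain is flexible (loose charts persist in the enlarged level sets), so $W^D$ is flexible. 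That argument consumes an arbitrary flexible structure on $W_D$, which is exactly what your handle-matching step cannot do.
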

Lemma \ref{lm:two-complements} follows from the more general   Lemma
\ref{lm:reg-Leg} below.  (Note alternately that the second part of Lemma
\ref{lm:two-complements} is equivalent to Proposition \ref{prop:flex}(i)).

Suppose $W$ is a flexible Weinstein domain and $\Lambda\subset \p W$ is a Legendrian submanifold. Then one can canonically construct a Liouville cobordism 
${}_\Lambda W$ by {\em caving out} a neighborhood of $\Lambda$, see Fig.~\ref{fig:caving-out}.
Thus $\p_-({}_\Lambda W)$  is  the canonical Darboux neighborhood $J^1(\Lambda)$ of $\Lambda$ in $\p W$.  Note that in the situation  of Lemma \ref{lm:two-complements} the Weinstein cobordism $W_D$ coincides with the cobordism ${}_{\Gamma}W^D$.
\begin{lemma}\label{lm:reg-Leg}
  ${}_\Lambda W$ is always a  Weinstein cobordism.  If ${}_\Lambda W$ is flexible then $W$ is flexible. If $W$ is flexible and  $\Lambda$ is loose then  ${}_\Lambda W$ is flexible.
 \end{lemma}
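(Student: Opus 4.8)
The plan is to deduce all three assertions from facts already established for regular and flexible Lagrangian \emph{discs}, by identifying ${}_\Lambda W$ with a complementary cobordism. Write $\wt W:=W\cup_\Lambda H$ for the Weinstein domain obtained from $W$ by attaching an index-$n$ Weinstein handle $H$ along $\Lambda\subset\p W$, and let $C\subset\wt W$ be the Lagrangian co-core disc of $H$, a disc with Legendrian boundary $\p C\subset\p\wt W$. The Weinstein structure on $\wt W$ in which $H$ is attached last has $X$ tangent to $C$ (in the standard index-$n$ handle the Liouville form vanishes on the co-core), and — after smoothing the corner — deleting an open tubular neighborhood $\Op(C)$ of $C$ from $\wt W$ is the same operation as caving a neighborhood of $\Lambda$ out of $W$; that is, ${}_\Lambda W=\wt W\setminus\Op(C)$, which is the identification noted before the statement of the lemma in the special case of Lemma \ref{lm:two-complements}. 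Dually, $W$ is recovered from ${}_\Lambda W$ by gluing back, along $\p_-({}_\Lambda W)=J^1(\Lambda)$, the deleted piece $P\subset W$, a standard Weinstein neighborhood of the isotropic submanifold $\Lambda$; thus $W=P\cup_{J^1(\Lambda)}{}_\Lambda W$, and $P$ deformation retracts onto $\Lambda^{n-1}$, so it admits a Weinstein handlebody with all handles of index $\le n-1$, i.e.\ $P$ is subcritical.

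With this dictionary the first assertion is immediate: by Proposition \ref{prop:reg-discs} the co-core $C$ is a regular Lagrangian disc in $\wt W$, so Lemma \ref{lm:equiv-charact-reg} lets us homotope, through tangent-to-$C$ Weinstein structures, to a special one, for which the complementary cobordism $\wt W_C$ is an honest Weinstein subcobordism; since $\wt W_C={}_\Lambda W$, this puts a Weinstein cobordism structure on ${}_\Lambda W$. (One can also simply verify directly that the caving of Figures \ref{fig:caving-out}--\ref{fig:caving_surgery} is governed by a fixed local Weinstein model near $\Lambda$.)

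For the second assertion, suppose ${}_\Lambda W$ is flexible, and fix a flexible Weinstein handlebody presentation of ${}_\Lambda W$ relative to $\p_-=J^1(\Lambda)$. Gluing $P$ underneath — equivalently, choosing a Lyapunov function on $W$ with $P=\{\phi\le c\}$ — produces a Weinstein handlebody for $W$ consisting of the (subcritical) handles of $P$ followed by the handles of ${}_\Lambda W$. Hence the index-$n$ handles of $W$ are exactly those of ${}_\Lambda W$, attached along the same links inside level sets that coincide with the corresponding level sets of ${}_\Lambda W$; these links therefore remain loose, and $W$ is flexible. For the third assertion, suppose $W$ is flexible and $\Lambda$ is loose. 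Attaching $H$ last to a flexible presentation of $W$ gives a flexible presentation of $\wt W$: the index-$n$ attaching link is the old loose link together with the loose knot $\Lambda\subset\p W$ (cf.\ \cite{CE12}). So $\wt W$ is a flexible Weinstein domain, and Proposition \ref{prop:flex}(i) shows its co-core $C$ is a flexible Lagrangian; by Definition \ref{def:flexible}, together with Lemma \ref{lm:equiv-charact-flexibility} to pass between tangent and special tangent structures, the complementary cobordism of $C$, namely ${}_\Lambda W$, is flexible. (Under the identification ${}_\Lambda W=\wt W_C$ this last assertion is precisely Proposition \ref{prop:flex}(i), and it recovers the second half of Lemma \ref{lm:two-complements}.)

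The part I expect to require the most care is the geometric dictionary itself: verifying that caving a neighborhood of $\Lambda$ out of $W$ coincides, after corner-smoothing, with removing a tubular neighborhood of the co-core of an index-$n$ handle attached along $\Lambda$, and that the recaptured neighborhood $P$ of $\Lambda$ is a subcritical Weinstein domain glued to ${}_\Lambda W$ along $J^1(\Lambda)$ compatibly with all the structures (including the sutured/corner bookkeeping). Once these local models are in place, the three statements follow formally from Propositions \ref{prop:reg-discs} and \ref{prop:flex}(i) and Lemmas \ref{lm:equiv-charact-reg} and \ref{lm:equiv-charact-flexibility}, together with the standard fact that attaching subcritical handles, or an index-$n$ handle along a loose Legendrian, preserves flexibility.
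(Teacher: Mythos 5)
Your dictionary ${}_\Lambda W=\wt W_C$ (with $\wt W=W\cup_\Lambda H$ and $C$ the co-core of $H$), together with the deductions from Proposition \ref{prop:reg-discs}, Lemmas \ref{lm:equiv-charact-reg}--\ref{lm:equiv-charact-flexibility} and Proposition \ref{prop:flex}(i), works --- but only when $\Lambda$ is a Legendrian \emph{sphere}. The lemma, however, is stated (and is needed precisely as the statement ``more general'' than Lemma \ref{lm:two-complements}) for an arbitrary Legendrian submanifold $\Lambda\subset\p W$; loose Legendrians in the sense of \cite{M11} need not be spheres. For non-spherical $\Lambda$ your very first step fails: an index-$n$ Weinstein handle has attaching region a neighborhood of $S^{n-1}\times 0$, so there is no domain $\wt W=W\cup_\Lambda H$, no co-core disc $C$, and Proposition \ref{prop:flex}(i) (stated for a loose Legendrian \emph{knot}) does not apply. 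Hence your argument establishes the first and third assertions only in the case $\Lambda\cong S^{n-1}$, which is essentially Lemma \ref{lm:two-complements} again rather than the general statement.

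The paper's proof is built exactly to avoid this restriction: it realizes ${}_\Lambda W$ by attaching to $V\sqcup W$ a handle with cylindrical Lagrangian core $\Lambda\times[-1,1]$ (gluing $\Lambda\times\{-1\}$ to the zero section in $\Sigma\subset\p U$, where $U=T^*(\Lambda\times[-1,1])$ is subcritical, and $\Lambda\times\{+1\}$ to $\Lambda\subset\p W$), and then decomposes this attachment according to a handle decomposition of the $n$-manifold $\Lambda\times[-1,1]$ with exactly one handle of index $n$; the attaching spheres of the subcritical pieces can be kept away from a loose chart of $\Lambda$, so the unique critical handle is attached along a loose knot. This single construction gives, for arbitrary $\Lambda$, both that ${}_\Lambda W$ is always a Weinstein cobordism (drop the looseness bookkeeping) and that it is flexible when $W$ is flexible and $\Lambda$ is loose. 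Your argument for the remaining direction (flexibility of ${}_\Lambda W$ implies flexibility of $W$, by gluing the subcritical neighborhood of $\Lambda$ below the handle presentation of ${}_\Lambda W$) does not use the co-core dictionary, works for general $\Lambda$, and agrees with the paper's. To repair the other two assertions you must either restrict to spherical $\Lambda$ or import the cylindrical-handle decomposition yourself.
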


  \begin{proof} 
    Take the cotangent bundle $T^*(\Lambda\times[-1,1])$ with its canonical subcritical Weinstein domain structure $U$ of a cotangent bundle of a manifold with boundary. The boundary $\p U$ contains $\Lambda\times 1$ as its Legendrian submanifold.
  Consider a tubular neighborhood $\Sigma$ of  $\Lambda\times 1$ in $\p U$.  It can be identified with a neighborhood  of $\Lambda$ in $J^1(\Lambda)$. Consider a trivial Weinstein cobordism  $V$ over $\Sigma$, or rather its sutured version, see Fig.~\ref{fig:caving-out}. 
  Note that the positive boundary $\p_+V$ is a copy of $\Sigma$ and it  coincides with the negative boundary $\p_-({}_\Lambda W)$.  The Weinstein handlebody presentation of   ${}_\Lambda W$ builds   ${}_\Lambda W$ by a sequence of handle attachments to $\p_+V$. Making the same handle attachments to $\Sigma\subset\p U$ we build instead  the original   Weinstein domain $W$. If  ${}_\Lambda W$ is flexible then the  result of the gluing is flexible  as well, because we added a flexible cobordism to a subcritical domain.
  
  Let us now assume that  $W$ is flexible and $\Lambda$ is loose. Then we  can get 
  ${}_\Lambda W$ by attaching  a  handle with the  cylindrical Lagrangian core $\Lambda\times[-1,1]$ to the cobordism
  $V\sqcup W$ by gluing $\Lambda\times (-1)$ to the $0$ section $\Lambda\subset \Sigma\times 1$ and $\Lambda\times (+1)$ to $\Lambda\subset\p W$, see 
  Fig.~\ref{fig:caving_surgery}.
     By choosing a handlebody decomposition of $\Lambda\times[-1,1]$ with exactly one $n$-handle, we can decompose the attachment of the handle with a cylindrical core to a sequence of attachments of handles corresponding  to the handles of the decomposition of $\Lambda\times[-1,1]$. One can arrange that the stable manifolds of the subcritical handles are in the complement of the intersection of $\Lambda\subset W$ with its loose chart. It then follows that the only index $n$ handle is attached along a loose knot, and hence the resulting cobordism is flexible.
    For general $\Lambda$, this construction shows that ${}_\Lambda W$ is always a Weinstein cobordism.        
         \end{proof}
\begin{problem}\label{prob:reg-Leg-flex}
Suppose ${}_\Lambda W$ (and hence $W$)
is   flexible. Is it true that then
$\Lambda$ is   a loose Legendrian?
 \end{problem}

Note that an affirmative answer to this problem would also give an affirmative answer to Problem \ref{prob:converse-to-flex}(i).

Emmy Murphy and Kyler Siegel produced an example, see \cite{MS15}, of a non-flexible Weinstein domain $W$  which becomes flexible after attaching a Weinstein  $n$-handle $H$ (they call  domains with this property {\em subflexible}. Denote
$\wh W:=W\cup H$. Let $(C,\p C)\subset (\wh W,\p \wh W)$ be the Lagrangian co-core of the handle $H$.

\begin{prop}\label{prop:MS-example}
 $(C,\p C)\subset (\wh W,\p \wh W)$ is non-flexible.
 \end{prop}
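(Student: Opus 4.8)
The plan is to argue by contradiction using the necessary condition for regularity/flexibility provided by Lemma~\ref{lm:nec-reg}, together with the two-complements correspondence of Lemma~\ref{lm:two-complements} (equivalently Lemma~\ref{lm:reg-Leg}). Suppose $(C,\p C)\subset(\wh W,\p\wh W)$ were flexible. Since $C$ is the co-core disc of the index-$n$ handle $H$ attached to $W$, Proposition~\ref{prop:reg-discs} identifies $C$ with a co-core of a Weinstein handlebody presentation of $\wh W$, and the Weinstein anti-surgery along $C$ recovers exactly $W$; in the notation of Section~\ref{sec:murphysiegel} we have $W=\wh W^{\,C}$ and the complementary cobordism is $W_C={}_{\Gamma}W$, where $\Gamma\subset\p W$ is the attaching sphere of $H$. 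If $C$ is flexible then by definition there is a special tangent Weinstein structure for which $W_C$ is a flexible cobordism; then the first part of Lemma~\ref{lm:two-complements} forces $W^{\,C}=W$ to be a flexible \emph{domain}. This contradicts the defining property of the Murphy--Siegel example, namely that $W$ is \emph{not} flexible (it is only subflexible). Hence $C$ cannot be flexible.

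Let me spell out the one delicate point. Definition~\ref{def:flexible} requires flexibility to hold with respect to \emph{some} special tangent Weinstein structure, and a priori a flexible structure on $\wh W$ making $C$ Lagrangian and the Liouville field tangent to $C$ need not be the particular handlebody presentation that exhibits $W$ as an anti-surgery. So the argument has to be: given any special tangent Weinstein structure $(\wh W,\om',X',\phi')$ to the flexible Lagrangian $C$, the sublevel set below the critical value of $C$ is (a Weinstein structure on) $T^*C\simeq T^*D^n$, a ball, and the complementary cobordism $W_C$ is obtained by attaching the remaining handles to $\p(T^*D^n)=S^{2n-1}$; by hypothesis $W_C$ is flexible. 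Now $\wh W=T^*D^n\cup W_C=B^{2n}\cup W_C$, so $\wh W$ is itself a flexible Weinstein \emph{domain}. Then apply the first sentence of Lemma~\ref{lm:reg-Leg}/Lemma~\ref{lm:two-complements}: since $W_C={}_\Gamma W$ is flexible, $W$ is flexible --- contradiction. (Alternatively one invokes Lemma~\ref{lm:reg-Leg} directly: $W_C={}_\Gamma W$ flexible implies $W$ flexible.)

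The step I expect to be the main obstacle is precisely the reduction in the previous paragraph: one must check that \emph{every} special tangent Weinstein structure to $C$, not just the one coming from the handle decomposition $\wh W=W\cup H$, has its sub-$C$-level equal to a ball with the standard structure up to Weinstein homotopy. This is where Proposition~\ref{prop:reg-discs} does the work --- it says a regular Lagrangian disc is always, up to Weinstein homotopy through tangent structures, a co-core of an index-$n$ handle on $B^{2n}$ --- so after such a homotopy (which preserves flexibility of $W_C$, since Weinstein homotopy of the ambient structure carries a flexible complementary cobordism to a flexible one) we are exactly in the anti-surgery situation and Lemma~\ref{lm:two-complements} applies verbatim. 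Everything else is bookkeeping: matching $W_C$ with ${}_\Gamma W$ as noted in the remark following Lemma~\ref{lm:two-complements}, and recalling that ``subflexible but not flexible'' is exactly the content of the Murphy--Siegel construction in \cite{MS15}.
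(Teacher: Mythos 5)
Your argument is correct and is essentially the paper's own proof: the paper simply identifies $W$ with $\wh W^{C}$ and applies the first part of Lemma~\ref{lm:two-complements} (equivalently Lemma~\ref{lm:reg-Leg}) to conclude that flexibility of the complementary cobordism $\wh W_{C}$ would force the Murphy--Siegel domain $W$ to be a flexible domain, contradicting its non-flexibility. Your additional care about quantifying over all special tangent Weinstein structures (via Proposition~\ref{prop:reg-discs}) is a point the paper's two-sentence proof leaves implicit, not a different approach.
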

 \begin{proof} We can identify $W$ with $\wh W^{C}$. But by Lemma \ref{lm:two-complements} the non-flexibility of $ \wh W^{C}$ implies the non-flexibility of $\wh W_C$.
 \end{proof}
 
 Thus, {\em in a flexible domain there could be non-flexible regular Lagrangian discs.}

Moreover, Murphy-Siegel's results imply that even  in the standard symplectic ball there exists a non-flexible but regular union of Lagrangian discs.

Let  $(\wh C,\p \wh C)\subset (\wh W,\p \wh W)$ be the flexible realization of the same formal Lagrangian class using Theorem \ref{thm:main}. It follows that $(C, \p C)$ and $(\wh C,\p \wh C)$ are not Hamiltonian isotopic or even symplectomorphic.
\begin{problem}\label{prob:MS-example-boundary}
 Are the spheres $\p C,\p\wh C\subset\p\wh W$ Legendrian isotopic?
\end{problem}
If the answer was positive, then by attaching an $n$-handle we would get a Weinstein manifold containing two regular Lagrangian spheres in the same formal class, one with a flexible and the other with a non-flexible  complement, thus providing a negative answer to the last part of Problem \ref{prob:around-nearby}.
\begin{remark}\label{rm:relativeMS}
One can ask if there is a relative version of the Murphy--Siegel phenomenon  \cite{MS15}, i.e., are there non-flexible regular Lagrangians  which become flexible after attaching a handle to the Weinstein domain?
 One example of this kind  is  immediate from \cite{MS15}. Namely, let $W$ be a subflexible (but not flexible) Weinstein domain, which becomes flexible after attaching a handle $H$. As work of Murphy-Siegel shows, one can attach to $W$ an index $n$ handle $\wt H$ along a loose knot in such a way that the new domain $\wt W:=W\cup \wt H$ remains non-flexible. Note that the co-core Lagrangian disc $C\subset \wt H$ is regular but not flexible in $\wt W$. However, it is flexible in $\wt W\cup H$ (by the same direct argument that shows that $W \cup H$ is flexible). This construction is not a completely satisfactory answer to the above problem; note that while the co-core disc $C$ is  not flexible in $\wt W$, it is {\em semi-flexible} there, see Section \ref{sec:semiflexible} below, and thus already has vanishing Floer-theoretic invariants. \end{remark}
\section{Semi-flexible Lagrangians}\label{sec:semiflexible}
  Suppose $L\subset W$ is a flexible Lagrangian cobordism in a Weinsein cobordism $W$, and a  $W'$ any  other Weinstein cobordism.   Suppose that $\wt W$ obtained from $W$ and $W'$ by gluing some connected components of $\p_-W$ not intersecting $\p_-L$ with the corresponding components of $\p_+W'$. We then say that $L\subset \wt W$ is a {\em semi-flexible} Lagrangian cobordism.

 A prototypical example of a semi-flexible Lagrangian is  the {\em  co-core} of an index $n$ Weinstein handle  attached along a loose Legendrian sphere $S\subset\p_+W$ to a Weinstein cobordism $W$.

 \begin{problem}\label{prob:semi-in-flex}
 Are there semi-flexible but not flexible Lagrangians in a flexible Weinstein cobordism?
  \end{problem}
   In particular, is it possible to make a non-flexible Weinstein cobordism flexible by  attaching  handle along a loose knot?

 \begin{prop}\label{prop:int-closed}
 If $L$ is semi-flexible Lagrangian with $\p L\neq\varnothing$ in a Weinstein domain $W$, then there are no closed  Lagrangians  $L'$ with non-zero homological  intersection  index
 $L\cap L'$.
 \end{prop}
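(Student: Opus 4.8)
The plan is to exploit the defining feature of a semi-flexible Lagrangian: by definition $L \subset \wt W$ comes from a flexible $L \subset W$, where $\wt W = W \cup_{\partial} W'$ is obtained by gluing (some components of) $\partial_- W$ to $\partial_+ W'$, with the gluing region disjoint from $\partial_- L$. Since $W$ is a Weinstein \emph{domain} here and $\partial L \neq \varnothing$, flexibility of $L$ in $W$ gives (via Lemma \ref{lm:equiv-charact-flexibility} and Definition \ref{def:flexible}) a special tangent Weinstein structure realizing $W = T^*L \cup W_L$ with $W_L$ flexible. The key point is that after this gluing, $L$ is still disjoint from all the critical handles that are attached in building $\wt W$ outside of $T^*L$: the handles of $W_L$ are attached away from $L$ by construction, and the handles of $W'$ are attached entirely within $W'$, hence within $\wt W \setminus W \subset \wt W \setminus T^* L$, in particular away from $L$ (using $\partial_- L$ disjoint from the gluing locus, so $L$ stays in the $W$-part).

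Next I would run a homological argument in the spirit of Lemma \ref{lm:nec-reg}. Let $F$ be a generic cotangent fiber of the $T^*L$ piece; its boundary $\partial F$ lies in a regular level set and misses the attaching spheres of \emph{all} remaining handles (those of $W_L$ and those of $W'$), exactly as in the proof of Lemma \ref{lm:nec-reg}. Hence $F$ defines a relative class $[F] \in H_n(\wt W, \partial \wt W)$ with $[F]\cdot[L] = \pm 1$. Now suppose $L'\subset \wt W$ is a \emph{closed} exact Lagrangian with $[L']\cdot[L]\neq 0$ in $\wt W$. The strategy is to derive a contradiction from the vanishing of Floer-theoretic invariants forced by semi-flexibility. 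Since $W_L$ is flexible and the construction adds only a subcritical piece $T^*L$ and the extra domain $W'$, one can arrange that $\wt W$ (or at least the relevant wrapped category around $L$) has vanishing symplectic homology restricted appropriately; more precisely, the flexible complement forces $WFH_*(L,L;\wt W) = 0$ (cf. Remark \ref{remark:boundaryofflexible} and Lemma \ref{vanishingwrapped}). But a closed exact Lagrangian $L'$ with nonzero algebraic intersection number with $L$ would, via the open–closed / module structure, obstruct this vanishing: the unit of $HF^*(L',L')$ maps nontrivially to $WFH_*(L,L;\wt W)$ because the geometric intersection pairing $[L]\cdot[L']\neq 0$ detects a nonzero product, contradicting $WFH_*(L,L;\wt W)=0$.

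The cleaner route, which I would actually carry out, avoids invoking $L'$'s Floer theory directly and instead uses the grading/finiteness argument: push $L'$ off itself by the Liouville flow to infinity is impossible since $L'$ is closed, so $L'$ must lie in a compact part; displacing $L'$ off $L$ by a Hamiltonian isotopy supported in the flexible region (where everything can be displaced, as the complement $W_L$ is flexible and one can wrap $F$ to be disjoint from a pushed-off $L'$) would make $[L]\cdot[L'] = 0$, the desired contradiction. The main obstacle is making precise that the flexibility of the complement $W_L$ \emph{together with} the extra non-flexible piece $W'$ still suffices to displace a given closed Lagrangian away from the cotangent fiber $F$: one must argue that $L'$ can be assumed disjoint from a neighborhood of the core $L$ — equivalently that $[L']$ is supported, up to Hamiltonian isotopy, in $W_L \cup W'$ — which requires a careful application of the $h$-principle/Weinstein handle-moving techniques of \cite{CE12} relative to $L$, and the verification that the intersection-counting class $[F]$ survives into $H_n(\wt W,\partial\wt W)$ after all the gluings. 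This is where the hypothesis $\partial L\neq\varnothing$ and that the gluing locus avoids $\partial_- L$ are essential, and it is the step I expect to demand the most care.
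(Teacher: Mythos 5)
There is a genuine gap, and it lies precisely in the route you say you would ``actually carry out.'' Displacing the closed Lagrangian $L'$ off $L$ (or off a neighborhood of $L$) by a Hamiltonian isotopy is \emph{not} something flexibility of the complement $W_L$ can provide: flexibility is a statement about the Weinstein handle structure (looseness of attaching spheres), and it lets one manipulate the Weinstein presentation and Legendrian attaching data, not a given closed exact Lagrangian sitting in the fixed symplectic manifold. Closed exact Lagrangians are exactly the objects that do not obey any $h$-principle --- e.g.\ the nonexistence of closed exact Lagrangians in flexible domains is itself a Floer/symplectic-homology theorem, not a displacement argument --- so the step you flag as ``demanding the most care'' is not a technical loose end but the whole difficulty, and it cannot be closed by handle-moving techniques relative to $L$. (Your preliminary construction of the cotangent fiber $F$ with $[F]\cdot[L]=\pm1$ is also beside the point; that is the content of Lemma \ref{lm:nec-reg} and is not used for this proposition.)

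Your first, Floer-theoretic sketch is the right idea and is essentially the paper's proof, but as stated the mechanism is off. There is no natural map sending the unit of $HF^*(L',L')$ into $WFH_*(L,L;\wt W)$ that the intersection number would make nonzero. The correct chain is: $L$ semi-flexible with $\p L\neq\varnothing$ gives $WFH(L,L;\wt W)=0$ by Lemma \ref{vanishingwrapped} (note that lemma is stated for the glued domain, so no extra argument about $W'$ is needed); since $L'$ is closed, $FH(L,L')=WFH(L,L')$ is a \emph{unital module} over $WFH(L,L)$, hence vanishes; and finally the Euler characteristic of $FH(L,L')$ equals $\pm\, [L]\cdot[L']$, which forces the homological intersection index to be zero. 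The Euler-characteristic identity is the bridge from the Floer vanishing to the homological statement, and it is missing from your write-up; with it, the argument closes, and the displacement route should be discarded.
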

 \begin{lemma}\label{vanishingwrapped}
Let $L$ be  a semi-flexible Lagragian  in a Weinstein domain $W$. Suppose that $\p L\neq\varnothing$.
Then  the wrapped Floer homology $WFH(L,L;W)$ vanishes.
 \label{lm:WFH}
 \end{lemma}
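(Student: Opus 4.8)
The plan is to reduce the statement to the known vanishing of wrapped Floer homology for Lagrangian co-cores of flexible Weinstein domains, using the structural description of semi-flexible Lagrangians as residing in a Weinstein cobordism built from a flexible piece glued to an arbitrary one. First I would unwind the definition: by hypothesis $L\subset \wt W$ where $\wt W = W\cup_{\p_-W} W'$ with $L\subset W$ flexible and $\p_-L\subset \p_+\wt W$. Since flexibility of $L$ means there is a special tangent Weinstein structure on $W$ with $W_L$ flexible, we get a handlebody presentation $W = T^*L\cup W_L$ in which $L$ appears as (a union of) co-core discs of index $n$ handles attached along loose Legendrian links in the complement of the rest of $L$. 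The completion $\widehat{\wt W}$ then contains $L$ as a union of Lagrangian co-cores of handles attached along links that are loose in the complement of the remaining co-cores.

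The second step is to invoke the computation of wrapped Floer homology for such co-cores. The key input is that $WFH_*(C,C;\wt W)$ for $C$ a co-core of a flexible handle vanishes: this follows from the generation/acyclicity results for flexible Weinstein domains (the co-core generates a subcritical-plus-flexible summand of the wrapped Fukaya category, whose self-Floer homology is acyclic), together with the observation that attaching further Weinstein handles to other parts of the boundary (here, gluing $W'$ along components of $\p_-W$ disjoint from $\p_-L$) does not affect the wrapped Floer homology of $L$ — the relevant Reeb chords and holomorphic curves stay within the flexible region because the loose charts and the handle attaching data defining $L$ are disjoint from $W'$. Concretely I would either cite the folkloric statement that $WFH(C,C)=0$ for a co-core of an $n$-handle attached along a loose Legendrian (which is the Floer-theoretic shadow of looseness, cf.\ the discussion in \cite{M11, CE12}), or deduce it from $SH(W_L)=0$ (flexibility) plus the fact that $WFH(C,C)$ is computed by the Legendrian DGA of the loose attaching sphere, which is trivial.

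The third step handles the ``semi-flexible but not flexible'' subtlety: one must check that gluing on $W'$ cannot create new wrapped generators for $(L,L)$. I would argue this by a neck-stretching / action argument: push $L$ and its isotopes so that all wrapping happens near $\p_+\wt W$, which lies in the flexible part; any Hamiltonian chord from $L$ to $L$ that enters $W'$ would have to cross the (subcritical or flexible) collar region, but since $\p_-L=\varnothing$ near that region and $L$ is cylindrical there, no chords are produced — alternatively, use that $\wt W$ deformation-retracts onto a skeleton in which $L$'s contribution is supported in $W$, and the wrapped Floer complex of $L$ is quasi-isomorphic to the one computed in the flexible piece alone.

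The main obstacle is making the ``gluing $W'$ does not change $WFH(L,L)$'' step rigorous without appealing to heavy machinery. One clean route is to observe that $\wt W$ need not be flexible, but it contains $L$ inside a flexible Weinstein subdomain (namely a neighborhood of $L$ together with $W_L$, which is flexible and disjoint from $W'$), and wrapped Floer homology of a Lagrangian is a property of that Lagrangian together with a Weinstein domain containing it in a way compatible with the Liouville structure — more precisely, invariance under deforming or enlarging the ambient domain away from $L$, which is the Viterbo-functoriality package. Once that is in place, the computation is purely inside the flexible domain and gives $WFH(L,L;\wt W)=WFH(L,L;\text{flexible domain})=0$. I expect the bulk of the actual proof in the paper to be exactly this Viterbo-functoriality reduction followed by citing the flexible co-core vanishing; the delicate point to get right is that the gluing along $\p_-W$ occurs along components \emph{not} meeting $\p_-L$, which is precisely what ensures $L$ never ``sees'' $W'$.
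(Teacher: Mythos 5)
Your overall strategy (kill $WFH(L,L)$ in a small piece containing $L$ via the unital-module-over-$SH$ argument, then argue that the rest of the ambient domain does not matter) is the same as the paper's, but two of your structural claims are off and the load-bearing step is not justified. First, in the presentation $W=T^*L\cup W_L$ coming from a special tangent Weinstein structure, $L$ is the core (zero-section) of the $T^*L$ piece, not a union of co-cores of index $n$ handles; so co-core vanishing/generation statements for flexible domains do not apply directly -- and in any case the ambient $\wt W$ is precisely \emph{not} flexible in the semi-flexible setting. Second, ``a neighborhood of $L$ together with $W_L$'' is not a Weinstein \emph{subdomain} of $\wt W$: along the glued components of $\p_-W$ the Liouville field points into it, so it is only a sub-cobordism, Viterbo restriction does not apply to it, and $SH$ of it is not what the module argument needs. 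The correct subdomain is just the canonical subcritical neighborhood $U\supset L$ (subcritical because $\p L\neq\varnothing$), where $WFH(L,L;U)=0$ since it is a unital module over $SH(U)=0$; this is the paper's first step, and $WFH(L,L;U\sqcup W')=WFH(L,L;U)$ is then trivial because $W'$ is a separate component at that stage.

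The genuine gap is your justification of the passage from $U\sqcup W'$ to $\wt W$. ``Wrapped Floer homology is invariant under enlarging the ambient domain away from $L$'' is false as a general principle: attach an index $n$ handle to $B^{2n}$ along a Legendrian unknot disjoint from, but linked with, the boundary of an equatorial Lagrangian disc $L$; the result is $T^*S^n$ with $L$ a cotangent fiber, and $WFH(L,L)$ jumps from $0$ to the homology of the based loop space. Viterbo transfer maps exist but are not isomorphisms, and neck-stretching does not confine curves to the flexible region: the handles of $W_L$ are in general attached along Legendrians that are linked with $\p L$ (they are only loose in its complement), and the curves of the wrapped complex do interact with them -- what kills their contribution is algebra coming from looseness, not geometric confinement. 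The paper's proof supplies exactly this missing input: $\wt W$ is obtained from the disjoint union $U\sqcup W'$ by attaching subcritical handles and index $n$ handles along Legendrians loose in the complement of $\p L$, and such attachments do not change $WFH(L,L)$, by the Legendrian surgery results of \cite{BEE12} for the critical handles and by folklore (see \cite{Cie02,MLYau,EkNg}) for the subcritical ones. You gesture at these ingredients (triviality of the DGA of a loose Legendrian) but only in the context of co-cores, and never apply them to the actual enlargement; as written, the Viterbo-functoriality reduction does not constitute a proof.
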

 \begin{proof}
For the canonical subcritical neighborhood $U\supset L$  we have $WFH(L,L;U)=0$ because it is a (unital) module over symplectic homology $SH(U)=0$.
 By assumption $W$ is obtained by attaching subcritical or flexible handles to the disjoint
 union $U\sqcup W'$ along spheres in the complement of $\p L$. But $WFH(L,L;U\sqcup W')= WFH(L,L;U)=0$, and attaching of subcritical or flexible handles does not change $WFH(L,L)$. For index $n$ handles this last fact is proven  in  \cite{BEE12}.  In the subcritical case this is a part of symplectic folklore (see \cite{Cie02, MLYau,  EkNg} for the closest statements in the literature). Hence, $WFH(L,L;W)=0$.
 \end{proof}
 \begin{proof}[Proof of Proposition \ref{prop:int-closed}]
 For any pair $L, L'$ with $L'$ closed, the Euler characteristic  of the Lagrangian Floer
 homology $FH(L,L')$ coincides with the homological intersection index $L\cap
 L'$. On the other hand,  $FH(L,L') = WFH(L,L')$ is a (unital) module over
 $WFH(L,L)$ which vanishes by Lemma \ref{lm:WFH}, hence $FH(L,L')$ must vanish
 as well.
 \end{proof}
 
 \begin{cor}\label{cor:no-semi}
Semi-flexible Lagrangians do not satisfy the existence $h$-principle. For instance, for any closed $L$ the class of a cotangent fiber in $T^*L$ cannot be realized by a semi-flexible Lagrangian disc with Legendrian boundary.
 \end{cor}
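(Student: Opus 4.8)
The plan is to derive the statement as a direct consequence of Proposition \ref{prop:int-closed} together with the necessary homological condition for regularity established in Lemma \ref{lm:nec-reg}. Here is the key observation: a cotangent fiber $F$ in $T^*L$ is a Lagrangian disc with Legendrian boundary in $\partial(T^*L)$ (after passing to the disc cotangent bundle $UT^*L$, or equivalently a Weinstein domain deformation-equivalent to $T^*L$), and it has homological intersection index $[F]\cdot[L] = \pm 1$ with the zero section $L$, which is a closed exact Lagrangian in $T^*L$. So the cotangent fiber class is the prototype of a Lagrangian disc class with non-zero intersection with a closed Lagrangian.

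First I would recall the contrast with Theorem \ref{thm:main}: the existence $h$-principle for flexible Lagrangians says that any formal Lagrangian cobordism with non-empty positive boundary of each component is formally isotopic to a flexible genuine one. Hence any formal Lagrangian disc class — in particular the formal class of a cotangent fiber $F \subset T^*L$ (which certainly has formal representatives, e.g. the genuine fiber itself viewed as formal) — is realized by a flexible, hence genuine, Lagrangian disc. So the $h$-principle does hold in the flexible category. The content of the corollary is that \emph{semi-flexible} Lagrangians are genuinely more restricted: no semi-flexible Lagrangian disc with Legendrian boundary can sit in the formal class of a cotangent fiber.

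The main step is to invoke Proposition \ref{prop:int-closed}: if $L'$ (here playing the role of the disc, with $\partial L' \neq \varnothing$) were a semi-flexible Lagrangian in the Weinstein domain $T^*L$ (or $UT^*L$), then there could be no closed Lagrangian with non-zero homological intersection index with $L'$. But the zero section is a closed Lagrangian in $T^*L$ meeting a cotangent fiber in exactly one point, so any Lagrangian disc in the formal (hence actual) homology class of a cotangent fiber has $[L']\cdot[\text{zero section}] = \pm 1 \neq 0$. This contradicts semi-flexibility. I would phrase this as: semi-flexibility of $L'$ plus $\partial L' \neq \varnothing$ forces, via Lemma \ref{lm:WFH}, $WFH(L',L';T^*L) = 0$, and then via the module structure of Lagrangian Floer homology $FH(L',\text{zero section})$ over $WFH(L',L')$ (as in the proof of Proposition \ref{prop:int-closed}) the vanishing of $FH(L',\text{zero section})$, whose Euler characteristic computes the intersection number $\pm 1$ — a contradiction. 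This also immediately gives the first sentence of the corollary: since the cotangent-fiber class \emph{is} realized by a genuine (even flexible) Lagrangian disc by Theorem \ref{thm:main} but \emph{cannot} be realized by a semi-flexible one, the existence $h$-principle fails in the semi-flexible category.

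I do not anticipate a serious obstacle here; the corollary is essentially a repackaging of Proposition \ref{prop:int-closed} specialized to $W = T^*L$ (or $UT^*L$) with $L'$ the zero section. The one point requiring a word of care is the identification of "the formal class of a cotangent fiber in $T^*L$" with a concrete formal Lagrangian disc in a Weinstein domain of the form permitted in Theorem \ref{thm:main} and Proposition \ref{prop:int-closed} — i.e. working in $UT^*L$, whose boundary is the unit cotangent bundle, and noting the cotangent fiber restricts to a Legendrian $(n-1)$-sphere there with non-empty boundary. Once that bookkeeping is in place, the homological intersection pairing with the zero section is manifestly $\pm 1$, and the argument closes.
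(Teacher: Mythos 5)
Your argument is correct and is essentially the paper's intended one: the corollary is an immediate application of Proposition \ref{prop:int-closed}, taking the closed Lagrangian to be the zero section of $T^*L$, which meets any disc in the homology class of a cotangent fiber with intersection number $\pm 1$, while the formal class itself plainly exists (the genuine fiber witnesses it), so the existence $h$-principle fails for semi-flexible Lagrangians. One small caveat: your aside that Theorem \ref{thm:main} realizes the fiber class by a \emph{flexible} disc in $T^*L$ is not right, since for closed $L$ the domain $T^*L$ is not flexible (its symplectic homology is nonzero), but this is inessential to the proof, which only needs the formal class to exist and Proposition \ref{prop:int-closed} to exclude semi-flexible representatives.
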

 
It seems similarly unlikely that semi-flexible Lagrangians satisfy any non-trivial  form of the 
uniqueness  statement.
However, we formulate this question as an open problem.

 \begin{problem}
Consider two semi-flexible Lagrangians $L_0,L_1\subset (W, \om)$.  Suppose that there exists a diffeomorphism $W\to W$ such that $f(L_0)=L_1$, and $f^*\om$  and  $\om$  are homotopic  as non-degenerate not necessarily closed $2$-forms. Is  $f$ isotopic to a symplectomorphism $g:W\to W$ with $g(L_0)=L_1$?
 
 \end{problem}
  We finish this section with Proposition \ref{prop:semi-flex-discs}  which gives a  Hamiltonian isotopy  classification  of semi-flexible Lagrangian discs   smoothly isotopic to a disc  in $\p_+W$. Consider $\R^{2n}$ with the standard Liouville form $\lambda=\frac12\left(\sum\limits_1^nx_idy_i-y_idx_i\right)$. In the unit sphere $S:= \{\sum\limits_1^n x_i^2+y_1^2=1\}$ consider a Legendrian equator $E:= S\cap\{y=(y_1,\dots y_n)=0\}$ 
      and the Lagrangian disc $\wt{E}$ bounded by $S$ in the unit ball $B:=\{\sum\limits_1^n x_i^2+y_1^2\leq 1\}\subset\R^{2n}$. 
  Denote by $C$ the pre-Lagrangian disc $ S\cap\{y_1\geq0, y_2=\dots=y_n=0\}$ which bounds $E$ in $S$. Note that 
  $$\lambda|_C= \frac12(x_1dy_1-y_1dx_1)|_C =-
  y_1^2d\left(\frac{x_1}{y_1}\right)|_C =\left(-1+\sum\limits_1^n x_j^2\right)d\left(\frac {x_1}{\sqrt{1-\sum\limits_1^n x_i^2}}\right).$$
 The disc $C$ is foliated by Legendrian discs $x_1=c\, y_1$, $c\in\R$. The Lagrangian lift of $C$ to $\R^{2n}\setminus 0$ (thought of as the symplectization of $(S,\{\lambda|_S=0\})$) is the plane $\{y_1=1, y_2=\dots= y_n=0\}$.
  We say that a Legendrian sphere $S$ in a contact manifold $(Y,\xi=\{\beta=0\})$ is a {\em Legendrian unknot} if it bounds in $Y$ a disc $D$ such that $\beta|_D$ is isomorphic to a form proportional to $\lambda|_C$. This definition is equivalent to the usual definition of the unknot  
 as a Legendrian sphere which is Legendrian isotopic to a sphere in a Darboux chart with the saucer-like front projection.  Any unknot bounds in the symplectization of  $Y$ (and hence in any Liouville filling of $Y$) a Lagrangian  disc $\wt D$ which is the lift of the pre-Lagrangian disc $D$. We call any Lagrangian disc Hamiltonian isotopic to $\wt D$ {\em small}.
 We recall that by Hamiltonian isotopy we mean the Hamiltonian isotopy of completed Lagrangians, which is equivalent
 to a Lagrangian homotopy of discs with Legendrian boundary.
 
     {\begin{prop}\label{prop:semi-flex-discs}
  Any   semi-flexible Lagrangian disc $(D,\p D) \subset (W,\p_+W)$ with Legendrian boundary  which is smoothly  isotopic to  a disc in $\p_+W$ is small.
    \end{prop}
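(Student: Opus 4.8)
The plan is to first reduce the problem to a statement about wrapped Floer homology and then extract the geometric classification from it. Since $(D,\partial D)$ is smoothly isotopic to a disc in $\partial_+W$, it is formally Lagrangian isotopic to a small disc $\wt D$ (the formal Legendrian class of $\partial D$ is that of the standard unknot, since in a Darboux chart the sphere and its formal data are standard, and the disc fills it formally). So $D$ and $\wt D$ are in the same formal Lagrangian class, and it suffices to upgrade formal isotopy to genuine Hamiltonian isotopy using the flexibility-type hypothesis on $D$.

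First I would record the Floer-theoretic consequence of semi-flexibility. By Lemma \ref{vanishingwrapped}, $WFH(D,D;W)=0$. The key point is that a small Lagrangian disc $\wt D$ also has $WFH(\wt D,\wt D;W)=0$ — indeed $\partial \wt D$ is a loose/unknotted Legendrian and can be displaced off itself by a contact Hamiltonian, or more directly $\wt D$ sits in a subcritical Darboux neighborhood on which wrapped Floer homology already vanishes and is a module over $SH$ of that subcritical piece. So both $D$ and $\wt D$ have vanishing wrapped Floer homology; the real content is to promote this ``invisible to Floer theory'' statement, together with the formal-isotopy data, into an actual Hamiltonian isotopy. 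The mechanism I expect to use is the uniqueness $h$-principle for flexible Lagrangians, Theorem \ref{thm:main}(2), applied not to $W$ itself but to an auxiliary flexible Weinstein cobordism into which $D$ embeds flexibly: by definition of semi-flexibility, $D\subset W$ arises from a flexible Lagrangian $L\subset W_0$ in a flexible Weinstein cobordism $W_0$ by gluing a possibly non-flexible piece $W'$ along negative boundary components away from $\partial_-L$. Since $D$ is a disc with $\partial_+ D\neq\varnothing$, and $D$ is flexible in the flexible cobordism $W_0$, Theorem \ref{thm:main}(1)--(2) classifies $D\subset W_0$ up to symplectomorphism by its formal class, and hence (the gluing with $W'$ being irrelevant to a neighborhood of $D$) classifies $D\subset W$ up to a symplectomorphism of a neighborhood. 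Applying the same to the small disc $\wt D$, which is tautologically flexible and in the same formal class, we conclude $D$ and $\wt D$ are carried to one another by a symplectomorphism of $W$ isotopic to the identity, i.e.\ by a Hamiltonian isotopy of completions — so $D$ is small.

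The main obstacle, and the step requiring the most care, is making precise the passage from ``$D$ is a regular/flexible Lagrangian in the flexible part $W_0$'' to ``$D$ is Hamiltonian isotopic to $\wt D$ inside $W$.'' The subtlety is that Theorem \ref{thm:main}(2) gives a symplectomorphism of $W_0$ matching the two flexible Lagrangians, but after re-gluing $W'$ one must check this symplectomorphism extends (or can be taken trivial near the gluing region, which is possible since $\partial_-L$ and $\partial_-D$ avoid the glued components) and is isotopic to the identity as a diffeomorphism so that it integrates to a Hamiltonian isotopy rather than just a symplectomorphism. For this one invokes, as in the proof of Theorem \ref{thm:main}, Theorem 14.3 and Proposition 11.8 of \cite{CE12} to keep the isotopy supported away from $\partial_-W$ and to ensure it is genuinely an isotopy from the identity; the disc case is favorable because $D$ is contractible, so there are no obstructions to trivializing the ambient isotopy along $D$. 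I would also double-check the formal-isotopy input: that a Legendrian sphere bounding a disc smoothly isotopic into $\partial_+W$ has the formal Legendrian class of the unknot, which follows since $r$ and $\tb$ (or their higher-dimensional analogues) are determined by the smooth isotopy class of the pair $(\partial D, D)$ and these agree with the unknot's by the explicit model for $C$ and $\wt E$ described just before the proposition.
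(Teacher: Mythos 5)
Your argument has a genuine gap at its central step: the passage from a symplectomorphism to a Hamiltonian isotopy. Theorem \ref{thm:main}(ii) (even granting that it could be applied here, see below) produces a symplectomorphism $\wt h$ of the ambient Weinstein manifold carrying one flexible Lagrangian to the other, and $\wt h$ is only \emph{smoothly} isotopic to the identity; a symplectomorphism smoothly isotopic to the identity need not be symplectically, let alone Hamiltonianly, isotopic to it, so you cannot conclude that $D$ and $\wt D$ are Hamiltonian isotopic. This is exactly the distinction flagged in the remark following Theorem \ref{thm:main} (item (iii): it is not known whether the uniqueness statement holds up to Hamiltonian isotopy), whose footnote points to Proposition \ref{prop:semi-flex-discs} as a \emph{partial result} in that direction -- so your proposal in effect assumes the very statement the proposition is designed to address. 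There are further problems with invoking Theorem \ref{thm:main}(ii) at all: it requires both Lagrangians to be flexible in the same ambient Weinstein manifold, but $D$ is only semi-flexible in $W$ (the complement contains the arbitrary piece $W'$), the small disc $\wt D$ need not be flexible in $W$, and your fallback of working in the flexible piece and ``regluing $W'$'' is not carried out: the symplectomorphism produced via Theorem 14.3 of \cite{CE12} is not supported away from the glued boundary components, and $\wt D$ does not naturally live in the flexible piece. Finally, the wrapped-Floer vanishing plays no role in your argument (as you concede), and the claim that $\tb$ is determined by the smooth isotopy class of the pair is not correct as stated.

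The paper's proof is direct and geometric, and shows where the smooth-isotopy hypothesis really enters. Semi-flexibility of the disc lets one write $W$ as the result of attaching a flexible cobordism $V$ to $T^*D\sqcup W'$. The sphere $\p D$ bounds a pre-Lagrangian disc $E\subset \p T^*D$ whose Lagrangian lift is Hamiltonian isotopic to $D$. The attaching spheres of the subcritical handles of $V$ miss $E$ by general position; the attaching spheres of the index $n$ handles can be pushed off $E$ \emph{smoothly} precisely because $D$ is smoothly isotopic to a disc in $\p_+W$, and the looseness of their Legendrian link then upgrades this smooth disjunction to a Legendrian isotopy. After these modifications $E$ survives into $\p_+W$, so $\p D$ is a Legendrian unknot bounding the pre-Lagrangian disc $E$ there, and $D$ is its small lift. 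Note that your proposal uses the smooth-isotopy hypothesis only to pin down formal data, which is too weak a use: formal data alone cannot yield a Hamiltonian isotopy in this setting.
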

    \begin{proof}
   By assumption, $W$ can be built by attaching a   flexible cobordism $V$ to   the disjoint union of
   $T^*D\sqcup W'$, where $W'$ is a Weinstein domain. Note that $D$ is small in $T^*D$ and thus the Legendrian sphere $\p D$ bounds a pre-Lagrangian disc $E \subset \p T^*D$.  The attaching spheres of all subcritical handles
   forming the flexible cobordism $V$ are generically disjoint from $E$ by dimension reasons.
   On the other hand, the attaching spheres of index $n$ handle can be disjoined from $E$ by a smooth isotopy in view of the condition that $D$ can be pushed by a smooth isotopy to the boundary $\p_+W$. But then the  flexibility condition, meaning here the looseness of the Legendrian link formed by   the attaching spheres of all index $n$ handles, allows us to disjoin  the attaching spheres of all   index $n$ handles from $E$  via a Legendrian isotopy. Hence, $\p D$ bounds the required pre-Lagrangian disc $E$ in $\p_+W$, and thus $D$ is small.
     \end{proof}
\medskip  
\subsubsection*{Acknowledgements}
Part of this paper was written when two of the authors visited Mittag-Leffler Institute in Djursholm, Sweden. They thank the Institute for hospitality and the participants of the program on symplectic and contact topology, especially Tobias Ekholm and Yank{\i} Lekili, for stimulating discussions.
The authors are grateful to Emmy Murphy and Kyler Siegel for sharing the 
 results of their paper \cite{MS15} before it was posted. 


\end{document}